\newtheorem{definition}{Definition}[section] \newtheorem{lemma}{Lemma}[section] \newtheorem{theorem}{Theorem}[section] \newtheorem{corollary}{Corollary}[theorem]  
\title{An analogue of the relationship between SVD and pseudoinverse
over double-complex matrices}
\author{Ran Gutin \thanks{Department of Computer Science, Imperial College London}}
\date{\today}
\begin{document}

\maketitle
\begin{abstract}
We present a generalisation of the pseudoinverse operation to pairs of
matrices, as opposed to single matrices alone. We note the fact that the
Singular Value Decomposition can be used to compute the ordinary
Moore-Penrose pseudoinverse. We present an analogue of the Singular
Value Decomposition for pairs of matrices, which we show is inadequate
for our purposes. We then present a more sophisticated analogue of the
SVD which includes features of the Jordan Normal Form, which we show is
adequate for our purposes. This analogue of the SVD, which we call the
Jordan SVD, was already presented in a previous paper by us called
``Matrix decompositions over the double numbers''. We adopt the
idea presented in that same paper that a pair of matrices is actually a
single matrix over the double-complex number system. \\
\textbf{Keywords}:   Matrix decompositions, Polar decomposition, Singular value decomposition, Jordan decomposition, Moore-Penrose pseudoinverse, Hypercomplex numbers \\
\textbf{AMS}: 1502, 13P25
\end{abstract}

\hypertarget{introduction}{%
\section{Introduction}\label{introduction}}

It is not uncommon in linear algebra to study generalisations of matrix
decompositions to pairs of matrices as opposed to single matrices alone
(\cite{producteigenvalue}, see section 7.7.2 of \cite{computations}). In a previous
paper called \emph{Matrix decompositions over the double numbers}
(\cite{gutin}), Gutin suggested a formalism for studying such decompositions.
The suggestion is that a pair of matrices, denoted \([A,B]\), is
actually a single matrix over the double or double-complex number
systems. If \(A\) and \(B\) are real matrices, then we should use the
double numbers. If \(A\) and \(B\) are instead complex matrices,
then we should use the double-complex number system. The double-complex numbers are a direct generalisation of the double numbers.

For the purposes of studying generalisations of Singular Value
Decomposition to this setting, it is most convenient to use the
double-complex number system. Using the double numbers is also
possible, if less convenient, as we indicate later on. From now on, we
refer to a pair of complex matrices of the form \([A,B]\) as a matrix
over the double-complex number system.

Gutin's paper references this one. In his paper, he introduces a
decomposition of double-complex matrices called the \emph{Jordan SVD}. Since
the notion is a new one, Gutin suggests some applications of the
idea. One application presented in that paper is for challenging (and
very likely refuting) a claim found in the English translation of
Yaglom's \emph{Complex numbers in geometry} (\cite{yaglom1968complex})
concerning the group of Linear Fractional Transformations over the
double number system. Gutin also references the main result that
we are going to prove in this paper. This result states roughly that all
the applications of the ordinary SVD to computing the pseudoinverse can
be generalised to the double-complex number system if we replace the ordinary
SVD with the Jordan SVD.

The generalisation of the pseudoinverse operation which we are going to
use is going to be defined by the usual four axioms, but interpreted
over double-complex matrices. Since those axioms are arguably unintuitive and
quite unwieldly, we give an alternative ``diagrammatic'' definition
which we prove is equivalent to the axiomatic one. This definition has
the advantage that it reduces the pseudoinverse operation over double-complex
matrices to fairly natural operations on complex matrices.

A theme in Gutin's paper is that a verbatim generalisation of a matrix
operation to double-complex matrices usually reduces to familiar matrix
operations over complex matrices. The pseudoinverse operation appears to
be no exception.

We show that whenever the pseudoinverse exists, then so does the Jordan
SVD. Note that both operations are sometimes invalid over some double-complex
matrices. This has two possible applications:

\begin{itemize}
\item
  The Jordan SVD can be used to compute the pseudoinverse of a double-complex
  matrix, in a similar way to how the SVD can be wielded to compute the
  pseudoinverse of a complex matrix.
\item
  We get a sufficient condition for the Jordan SVD to exist, which is
  strong, but not quite a necessary condition.
\end{itemize}

The computational advantages of the Jordan SVD can be debated. The
Jordan decomposition is rarely used in numerical applications,
apparently for reasons of numerical instability. The connection is still
obviously an interesting one.

We motivate further study of the Jordan SVD by the fact that it generalises many matrix decompositions, sometimes directly as in the case of the SVD and Jordan decomposition, and sometimes indirectly as in the case of the polar decomposition for complex matrices and Algebraic Polar Decomposition. The Algebraic Polar Decomposition is a variant of the polar decomposition for complex matrices which eliminates all references to complex conjugation, and doesn't exist for some matrices (\cite{algebraicpolar,kaplansky}). The verbatim generalisation of the polar decomposition to double-complex matrices is in fact equivalent to the Jordan SVD, in the sense that it exists for exactly the same matrices, and one can be derived from the other.

Finally, we remark on a possible misunderstanding surrounding the ring
\(\mathbb C \oplus \mathbb C\). All four arithmetic operations over this
ring (addition, subtraction, multiplication, division) happen
componentwise. It might then be mistakenly concluded that there is
nothing here to study. However, linear
algebra over the complex numbers also includes the \emph{complex
conjugation} operation. A case could be made that a good
generalisation of complex conjugation over arbitrary rings is an
arbitrary ring involution. The complex numbers themselves admit
\emph{two} involutions in fact, one of which is complex conjugation, and
the other one is the trivial involution \(z \mapsto z\). Each involution
over a ring gives rise to a different kind of linear algebra. Notions
like unitary matrices and Hermitian matrices change meaning if the
involution changes. Over the double-complex numbers, any involution which is
componentwise results in a trivial theory. Out of 6 possible involutions,
four of them are componentwise operations:
\begin{itemize}
  \item $(w,z) \mapsto (w,z)$
  \item $(w,z) \mapsto (\overline w,z)$
  \item $(w,z) \mapsto (w,\overline z)$
  \item $(w,z) \mapsto (\overline w,\overline z)$
\end{itemize}
These all result in a trivial theory. This then leaves two that are not
componentwise:
\begin{itemize}
  \item $(w,z) \mapsto (z,w)$
  \item $(w,z) \mapsto (\overline z, \overline w)$
\end{itemize}
These two involutions result in the same theory. We pick the first, but
we could've just as easily picked the second, and it wouldn't make a
difference.

\hypertarget{definitions}{%
\section{Definitions}\label{definitions}}

\hypertarget{sec:definitions}{%
\subsection{Double numbers and double
matrices}\label{sec:definitions}}

The \emph{double numbers}, called split-complex numbers in Wikipedia, and
\emph{perplex numbers} in some other sources) are the hypercomplex
number system in which every number is of the form

\[a + bj, a \in \mathbb R, b \in \mathbb R\]

where \(j\) is a number that satisfies \(j^2 = 1\) without \(j\) being
either \(1\) or \(-1\).

The definition of the arithmetic operations should be clear from the
above description. Define \((a + bj)^* = a - bj\) (similar to complex
conjugation).

Consider the basis \(e = \frac{1+j}2\) and \(e^* = \frac{1-j}2\).
Observe that \(e^2 = e\), \((e^*)^2 = e^*\) and \(e e^* = e^* e = 0\).
Thus \((a e + b e^*) (c e + d e^*) = (ac) e + (bd) e^*\). In other
words, multiplication of double numbers is componentwise in this
basis. Thanks to this, we see that the double numbers can be
defined as the algebra \(\mathbb R \oplus \mathbb R\), where \(\oplus\)
denotes the direct sum of algebras. Observe that
\((a e + b e^*)^* = b e + a e^*\).

We shall consider matrices of double numbers. A matrix \(M\) of
double numbers can be written in the form
\(A \frac{1 + j}2 + B^T \frac{1 - j}2\) where \(A\) and \(B\) are real
matrices. Write \([A,B]\) for \(A \frac{1 + j}2 + B^T \frac{1 - j}2\).
Observe the following identities: \[\begin{aligned}
\\
[A,B] + [C,D] &= [A+C,B+D] \\
[A,B] \times [C,D] &= [AC, DB] \\
[A,B]^* &= [B,A]
\end{aligned}\]

In the above \([A,B]^*\) refers to the conjugate-transpose operation.
Notice that the second component of \([A,B] \times [C,D]\) is \(DB\),
not \(BD\).

\hypertarget{double-complex-numbers-and-matrices}{%
\subsection{Double-complex numbers and
matrices}\label{double-complex-numbers-and-matrices}}

A \emph{double-complex number} (\cite{doublecomplex1}) is a number of the form
\(w + zj\) where \(w\) and \(z\) are complex numbers, and \(j^2 = +1\).
Multiplication is the same as over the double numbers, namely:
\((w + zj)(w' + z'j) = ww' + zz' + j(wz' + zw')\). The product is a
commutative one. The definition of conjugation is
\((w + zj)^* = w - zj\). Sometimes these numbers are called
\emph{tessarines}.

By a \emph{double-complex matrix}, we mean a matrix whose entries are
double-complex numbers. A double-complex matrix can always be expressed in the
form \([A,B]\), which we define to mean
\(A \frac{1+j}{2} + B^T \frac{1-j}{2}\) where \(A\) and \(B\) are
complex matrices. Observe that we use the tranpose of \(B\) rather than
the conjugate-transpose of \(B\).

Observe then the following identities\label{basic-operations}: \[\begin{aligned}
\\
[A,B] + [C,D] &= [A+C,B+D] \\
[A,B] \times [C,D] &= [AC, DB] \\
[A,B]^* &= [B,A]
\end{aligned}\]

\hypertarget{sec:families}{%
\subsection{Families of double-complex matrices}\label{sec:families}}

We say that a double-complex matrix \(M\) is \emph{Hermitian} if it
satisfies \(M^* = M\). Observe then that a Hermitian matrix is precisely
of the form \([A,A]\).

We say that a double-complex matrix \(M\) is \emph{unitary} if it
satisfies \(M^* M = I\). Observe then that a unitary matrix is precisely
of the form \([A,A^{-1}]\).

We say that a double-complex matrix \(M\) is \emph{lower triangular} if
it is of the form \([L,U]\) where \(L\) is a lower triangular real
matrix and \(U\) is an upper triangular real matrix. Similarly, a matrix
of the form \([U,L]\) is called \emph{upper triangular}.

A double-complex matrix is called \emph{diagonal} if it is of the form
\([D,E]\) where \(D\) and \(E\) are diagonal real matrices.

A double-complex matrix is complex precisely when it is of the form
\([A,A^T]\).

\hypertarget{linear-map-notation}{%
\subsection{Linear map notation}\label{linear-map-notation}}

When giving the diagrammatic characterisation of the pseudoinverse, we
find it helpful to treat complex matrices as linear maps. When
introducing a linear map, we may do so using the notation
\(f: U \to V\). We call \(U\) the \emph{domain} of \(f\) and \(V\) the
\emph{codomain} of \(f\). We will denote the \emph{composition} of two
linear maps \(f\) and \(g\) as \(f \circ g\), where
\((f \circ g)(x) = f(g(x))\).

Sometimes, we write \(f(W)\) where \(f\) is some linear map or matrix
and \(W\) is either a vector space or a set. When we write this, we are
referring to the set \(\{f(w) : w \in W\}\). Sometimes, this set will
itself be a vector space.

\section{The role of involutions}

The automorphism group of the double-complex algebra (understood as an algebra over the real numbers) has eight elements. Six of those have order $2$, and can therefore be called \emph{involutions}. Involutions are central to the study of double-complex matrices because they can be used as a substitute or even generalisation of complex conjugation. Linear algebra essentially reduces to just five operations: Addition, subtraction, multiplication, division, and conjugation.

As we stated in the introduction, four of those involutions result in a trivial theory. They are:
\begin{itemize}
  \item $(w,z) \mapsto (w,z)$
  \item $(w,z) \mapsto (\overline w,z)$
  \item $(w,z) \mapsto (w,\overline z)$
  \item $(w,z) \mapsto (\overline w,\overline z)$
\end{itemize}
To illustrate why they result in a trivial theory, we pick one of them, and hope that the reader understands that a similar issue is present in the other three. By $(w,z)$, we mean $w \frac{1+j}2 + z\frac{1-j}2$ where $w$ and $z$ are complex numbers.

For the sake of argument, we will define $(w,z)^*$ to mean $(\overline w,\overline z)$. We can define $(A,B)$ to mean $(1,0) A + (0,1) B$ where $A$ and $B$ are arbitrary complex matrices. What we observe is that if $f(M)$ is some matrix operation defined on complex matrices $M$, we can generalise its definition to double-complex matrices by interpreting any complex conjugations in its definition to mean the operation $(w,z)^* = (\overline w, \overline z)$. We will then immediately have that $f((A,B)) = (f(A),f(B))$. For instance, if $f$ is the pseudoinverse of a matrix (its verbatim generalisation that uses this particular involution) then we have that this operation always distributes over the components $A$ and $B$. The same thing is true if $f$ is the SVD or the LU decomposition, or we would argue \emph{anything}. There is therefore nothing to study.

We also mentioned two interesting involutions in the introduction. They are:
\begin{itemize}
  \item $(w,z) \mapsto (z,w)$
  \item $(w,z) \mapsto (\overline z, \overline w)$
\end{itemize}
We claimed that these somehow result in an identical theory to eachother. To understand why, we endeavour to define an operation $(A,B) \mapsto [A,B]$ on pairs of complex matrices, mapping to the double-complex matrices bijectively, that satisfies:
$$\begin{aligned}
[A,B] + [C,D] &= [A+C,B+D] \\
[A,B] \times [C,D] &= [AC, DB] \\
[A,B]^* &= [B,A]
\end{aligned}$$
The third identity involves an involution, so our definition of $[A,B]$ will change along with the involution. If the involution is defined as $(w,z) \mapsto (z,w)$, then we can define $[A,B]$ to mean $A \frac{1+j}2 + B^T\frac{1-j}2$ as we have chosen to do in this paper. This then causes the above three identities to all be satisfied. Likewise, if we define our involution to be $(w,z) \mapsto (\overline z, \overline w)$, then we can define $[A,B]$ to mean $A \frac{1+j}2 + \overline B^T \frac{1-j}2$. Once again, the above three identities get satisfied.

So what's the difference between these two involutions? It is about which double-complex matrices $[A,B]$ are understood to be the complex ones. One approach makes it so that the double-complex matrices of the form $[A,A^T]$ are taken to be the complex ones. The other approach makes it so that double-complex matrices of the form $[A,A^*]$ are taken to be the complex ones. Since in this paper, we treat double-complex matrices as pairs of complex matrices, we don't ultimately care which ones are taken to be complex.\footnote{With the exception of lemma \ref{orthonormal}, where we bypassed the $[A,B]$ notation. This can still be shown to be valid regardless of how the involution is defined.} By accident, we made it so that the complex matrices are those of the form $[A,A^T]$.

Observe that given either definition of $[A,B]$, the claim that a double-complex matrix is Hermitian precisely when it satisfies $A = B$ is still true. All the other characterisations of the different families of matrices given in section \ref{sec:families} stay the same.

The convention that matrices of the form $[A,\overline A^T]$ are the complex one (which is the one we didn't pick) is arguably slightly better. It results in the notions of unitary or Hermitian double-complex matrices being direct generalisations of their complex counterparts. For that reason, in future papers, this convention might be a slightly better one to adopt.

Given that it turns out that our notion of Hermitian matrix is not a direct generalisation of the one over complex matrices, what does it generalise? It turns out that it includes all the \emph{complex symmetric} matrices, which are those complex matrices which satisfy $A = A^T$. Our notion of unitary matrix generalises the set of \emph{complex orthogonal} matrices, which are those that satisfy $A^T A = AA^T = I$. This all follows from section \ref{sec:families}. Perhaps this suggests that complex orthogonality and unitarity are on equal footings with eachother.

\hypertarget{singular-value-decomposition-over-double-complex-matrices}{%
\section{Singular Value Decomposition over double-complex
matrices}\label{singular-value-decomposition-over-double-complex-matrices}}

In the following, we work with square matrices. It is possible to
generalise to non-square matrices.

Consider the singular value decomposition (\cite{strang09}): \[M = U S V^*\]
where \(U\) and \(V\) are unitary, and \(S\) is diagonal and Hermitian.
If we interpret this over the double-complex numbers (using section
\ref{sec:families}), we get \[[A,B] = [P,P^{-1}] [D,D] [Q^{-1},Q]\] We break
into components and get \[A = PDQ^{-1}, B = QDP^{-1}.\] Finally, observe
that \(AB = P D^2 P^{-1}\) and \(BA = Q D^2 Q^{-1}\). In other words, we
get the familiar eigendecompositions of \(AB\) and \(BA\).

Finally, notice that while it's obvious that SVD is reducible to
eigendecomposition (namely, the eigendecompositions of \(A^* A\) and
\(A A^*\)) it is less obvious that a reduction in the opposite direction
is possible. We have provided such a reduction here.

\hypertarget{introduction-to-the-jordan-svd}{%
\section{Introduction to the Jordan
SVD}\label{introduction-to-the-jordan-svd}}

Given the above derivation of double-complex SVD, it's clear that not all
double-complex matrices have an SVD. This follows from the fact that we need
\(AB\) and \(BA\) to be diagonalisable. We propose a partial remedy to
this problem that uses the Jordan Normal Form. For the sake of
simplicity, we will assume that all our matrices are square.

\begin{definition}[Jordan SVD]
The Jordan SVD of a double-complex matrix $M$ is a factorisation of the form $M = U [J,J] V^*$ where $J$ is a complex Jordan matrix, and $U$ and $V$ are unitary double-complex matrices.
\end{definition}

All non-singular double-complex matrices have a Jordan SVD. This is an
improvement over the naive double-complex SVD, which doesn't always exist
even for non-singular matrices. We will prove a stronger result, which
states that any double-complex matrix with a pseudoinverse has a Jordan SVD. Take
note though that not all double-complex matrices have a Jordan SVD. Some
necessary conditions for a double-complex matrix \([A,B]\) to have a Jordan
SVD are that:

\begin{enumerate}
\def\labelenumi{\arabic{enumi}.}

\item
  \(\operatorname{rank}(A) = \operatorname{rank}(B)\)
\item
  \(\sqrt{AB}\) exists
\item
  \(\sqrt{BA}\) exists
\end{enumerate}

All three conditions are necessary, and not one of them can be dropped
because the other two are insufficient. For instance, the matrix
\([A,B]\) where
\(A = \left(\begin{matrix}0 & 0\\0 & 1\end{matrix}\right)\) and
\(B = \left(\begin{matrix}0 & 1\\0 & 0\end{matrix}\right)\) does not
have a Jordan SVD. This is because conditions 1 and 2 are satisfied, but
condition 3 is not.

Finding a necessary and sufficient condition for a Jordan SVD to exist
remains an open problem.

\subsection{As a generalisation of other matrix decompositions}

In this section, we attempt to show how the Jordan SVD relates to polar decomposition and SVD. We also connect it to attempts to define analogues of SVD and polar decomposition of complex matrices where the role of complex conjugation is replaced with the trivial involution (\cite{algebraicpolar,kaplansky}). The results here are mainly useful for motivation, and are not needed in later section.

The Jordan SVD generalises the Jordan decomposition. Let $PJP^{-1}$ be the Jordan decomposition of a complex matrix $A$. We then have that $[A,A] = U [J,J] V^*$ where $U = V = [P,P^{-1}]$. Note that $[A,A]$ is the general form of a double-complex Hermitian matrix.

Observe that the Jordan SVD generalises the ordinary SVD of a complex matrix. A Jordan SVD of $[A,\overline A^T]$ is $[U,\overline U^T][D,D][V,\overline V^T]^*$ where $U$ is a unitary complex matrix, $D$ is a diagonal real matrix, and $V$ is a unitary complex matrix. 

\begin{definition} The \emph{polar decomposition} of a double-complex matrix $M$ is a factorisation of the form $M = UP$ where $U$ is unitary and $P$ is Hermitian. \end{definition}

\begin{theorem} A double-complex matrix $M$ has a Jordan SVD if and only if it has a polar decomposition. \end{theorem}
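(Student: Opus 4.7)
The plan is to prove both directions directly using the multiplication identity $[A,B][C,D] = [AC,DB]$, the conjugation identity $[A,B]^* = [B,A]$, the characterisations of Hermitian matrices as those of the form $[A,A]$ and unitary ones as those of the form $[P,P^{-1}]$, and the fact that every complex matrix has a Jordan decomposition.

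For the forward direction, I would start with a Jordan SVD $M = U[J,J]V^*$ and simply regroup as $M = (UV^*)(V[J,J]V^*)$. The factor $UV^*$ is a product of unitary double-complex matrices, hence unitary. For the second factor I would check that $[J,J]$ is itself Hermitian (since $[J,J]^* = [J,J]$), and then observe that conjugation of a Hermitian matrix by a unitary preserves Hermiticity: $(V[J,J]V^*)^* = V[J,J]^*V^* = V[J,J]V^*$. This gives a polar decomposition.

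For the reverse direction, I would start with $M = UP$ where $P$ is Hermitian, so $P = [A,A]$ for some complex matrix $A$. I then apply the Jordan decomposition $A = QJQ^{-1}$ at the level of complex matrices and repackage it on the double-complex side. Setting $V = [Q, Q^{-1}]$, which is unitary, a short calculation using $[A,B][C,D]=[AC,DB]$ gives
\[
V[J,J]V^* = [Q,Q^{-1}][J,J][Q^{-1},Q] = [QJQ^{-1},\, QJQ^{-1}] = P.
\]
Hence $M = UV[J,J]V^*$, and $UV$ is unitary, producing a Jordan SVD.

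Neither direction presents a genuine obstacle: both essentially reduce to the identities listed in section \ref{sec:definitions} together with the fact that every complex matrix admits a Jordan form. The only subtle point worth spelling out clearly is that the Jordan SVD factor $[J,J]$, viewed inside the double-complex matrix algebra, is Hermitian in the sense of section \ref{sec:families} even though $J$ itself need not be Hermitian as a complex matrix; this is what makes the forward implication go through without any assumption on the structure of $J$.
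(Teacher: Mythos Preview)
Your proof is correct and follows essentially the same approach as the paper: regrouping $U[J,J]V^*$ as $(UV^*)(V[J,J]V^*)$ for the forward direction, and inserting the Jordan decomposition of the complex matrix $A$ inside $P=[A,A]$ for the converse. The paper's argument is terser (it does not spell out why $V[J,J]V^*$ is Hermitian), but the underlying computation is identical.
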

\begin{proof}
Assume that $M$ has a Jordan SVD. Write it as $W[J,J]V^*$. Let $U = WV^*$ and $P = V[J,J]V^*$. Observe that $UP$ is a polar decomposition of $M$.

We now prove the converse. Assume that $M$ has a polar decomposition $UP$. Write $P$ as $[A,A]$. $A$ has a Jordan decomposition $QJQ^{-1}$. We thus have that $M = U[Q,Q^{-1}][J,J][Q,Q^{-1}]^*$, which is a Jordan SVD where the three factors are $U[Q,Q^{-1}]$, $[J,J]$ and $[Q,Q^{-1}]^*$.
\end{proof}

Observe that the double-complex polar decomposition generalises the so-called \emph{Algebraic Polar Decomposition} (\cite{algebraicpolar,kaplansky}) of complex matrices. The Algebraic Polar Decomposition of a complex matrix $A$ is a factorisation of the form $A = UP$ where $U$ satisfies $UU^T = I$ and $P$ satisfies $P = P^T$. Given $M = [A,A^T]$, we have that a polar decomposition of $M$ is of the form $[U,U^T] [P,P]$.

\hypertarget{double-jordan-svd-as-opposed-to-double-complex-jordan-svd}{%
\subsection{Double-number Jordan SVD as opposed to double-complex Jordan
SVD}\label{double-jordan-svd-as-opposed-to-double-complex-jordan-svd}}

Recall that there is an alternative to the Jordan Normal Form over real
matrices where Jordan blocks of the form
\(\begin{bmatrix}a + bi & 0 \\ 0 & a - bi\end{bmatrix}\) are changed to
blocks of the form \(\begin{bmatrix}a & -b \\ b & a\end{bmatrix}\). This
change results in a variant of the Jordan Normal Form that uses only
real numbers and not complex numbers. Using this variant of the JNF,
it's possible to define a variant of the Jordan SVD for double
matrices without using double-complex numbers. For the sake of completeness,
we define this variant now but don't use it later on.

\begin{definition}[Double-number Jordan SVD]
The double-number Jordan SVD of a double matrix $M$ is a factorisation of the form $M = U [J,J] V^*$ where $J$ is a real Jordan matrix, and $U$ and $V$ are unitary double matrices.
\end{definition}

\hypertarget{introduction-to-the-double-double-complex-pseudoinverse}{%
\section{Foundations of the double / double-complex
pseudoinverse}\label{introduction-to-the-double-double-complex-pseudoinverse}}

Recall the four axioms that define the pseudoinverse \(M^+\) of a matrix
\(M\):

\begin{enumerate}
\def\labelenumi{\arabic{enumi}.}

\item
  \(MM^+M = M\)
\item
  \(M^+MM^+ = M^+\)
\item
  \((M^+M)^*=M^+M\)
\item
  \((MM^+)^*=MM^+\)
\end{enumerate}

The objective of the pseudoinverse is to extend the matrix inverse to
matrices which are singular. We extend these relations to double-complex
matrices to define the pseudoinverse of a double-complex matrix. In this
section, we present another characterisation of the double-complex pseudoinverse which does not use the axioms above. This other characterisation is more intuitive and can be expressed in a clean way using diagrams. This makes proving the last results in this section easier, as well as the results in later sections. We finish this section by proving a necessary and sufficient condition for the pseudoinverse of a double-complex matrix to exist.

First, we demonstrate that not all double-complex matrices have a
pseudoinverse. For instance, consider \(M = \frac{1 + j}2\) (considered
as a \(1 \times 1\) matrix). It's easy to show that this matrix does not have a pseudoinverse by directly appealing to the four axioms above. This fact contrasts with the case of real and complex matrices.
Those matrices do always have pseudoinverses.

The matrix pseudoinverse, when it exists, is always unique. This fact is
true over all rings, including the double-complex numbers. The standard proof
of this fact in the complex case generalises to all rings.

We will begin by giving a necessary condition for the pseudoinverse to exist, which we will later show is a sufficient condition.

\begin{lemma} \label{necessary-condition} If $M = [A,B]$ has a pseudoinverse then $\operatorname{Im}(AB) = \operatorname{Im}(A) \cong \operatorname{Im}(B) = \operatorname{Im}(BA)$. \end{lemma}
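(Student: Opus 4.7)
The plan is to work directly from the four pseudoinverse axioms by writing $M^+ = [C,D]$ and translating everything into complex-matrix identities via $[A,B][C,D] = [AC,DB]$ and $[A,B]^* = [B,A]$. Axioms 1 and 2 give the ``weak inverse'' relations $ACA = A$, $BDB = B$, $CAC = C$, $DBD = D$. The Hermiticity axioms 3 and 4 give $CA = BD$ and $AC = DB$ (since $[CA, BD]^* = [BD, CA]$ and similarly for the other one).

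The key manipulation is to combine these two families. Substituting $CA = BD$ into $ACA = A$ yields $A(BD) = A$, i.e.\ $ABD = A$, and substituting $AC = DB$ into the same identity yields $DBA = A$. Symmetrically, $BDB = B$ combined with $BD = CA$ and $DB = AC$ gives $BAC = B$ and $CAB = B$. From $A = ABD$ I immediately get $\operatorname{Im}(A) \subseteq \operatorname{Im}(AB)$, and the reverse inclusion is trivial because $AB = A \cdot B$, so $\operatorname{Im}(AB) = \operatorname{Im}(A)$. The same argument applied to $B = BAC$ gives $\operatorname{Im}(B) = \operatorname{Im}(BA)$.

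It remains to exhibit the isomorphism $\operatorname{Im}(A) \cong \operatorname{Im}(B)$. I would use $B$ itself (viewed as a linear map $\mathbb{C}^n \to \mathbb{C}^n$) restricted to $\operatorname{Im}(A)$. It maps into $\operatorname{Im}(BA) = \operatorname{Im}(B)$ by construction. Surjectivity onto $\operatorname{Im}(B) = \operatorname{Im}(BA)$ is immediate: any $BAx$ is $B(Ax)$ with $Ax \in \operatorname{Im}(A)$. Injectivity is where the second derived identity does the work: if $Aw \in \operatorname{Im}(A)$ satisfies $B(Aw) = 0$, then from $DBA = A$ we get $Aw = D(BAw) = 0$. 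Hence $B|_{\operatorname{Im}(A)}$ is a linear bijection onto $\operatorname{Im}(B)$.

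The main obstacle is really just spotting the substitution $ACA = A(CA) = A(BD) = ABD$; once that identity is in hand, the remainder is bookkeeping. A minor subtlety worth flagging is that we are implicitly assuming the double-complex matrix $M$ is square (so that $A$ and $B$ have compatible shapes for both $AB$ and $BA$); this matches the ``square'' convention adopted in the paper's sections on SVD and Jordan SVD.
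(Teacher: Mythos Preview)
Your proof is correct and follows essentially the same strategy as the paper's: expand the four pseudoinverse axioms componentwise and exploit the Hermiticity identities $CA=BD$, $AC=DB$ to push images of $A$ and $B$ into images of $AB$ and $BA$. The one tactical difference is that you establish $\operatorname{Im}(A)\cong\operatorname{Im}(B)$ via the explicit isomorphism $B|_{\operatorname{Im}(A)}$ (injectivity from $DBA=A$), whereas the paper obtains it from the rank chain $\operatorname{rank}(A)\geq\operatorname{rank}(AC)=\operatorname{rank}(DB)\geq\operatorname{rank}(B)\geq\operatorname{rank}(BD)=\operatorname{rank}(CA)\geq\operatorname{rank}(A)$; your version is slightly more constructive and in fact anticipates the map used in the paper's later diagrammatic description.
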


\begin{proof}

Let $M^+ = [C,D]$. The four axioms for the pseudoinverse expand out to:

\begin{enumerate}
    \item $ACA = A$ and $BDB = B$
    \item $CAC = C$ and $DBD = D$
    \item $DB = AC$
    \item $BD = CA$
\end{enumerate}

To show that $\operatorname{Im}(A) \cong \operatorname{Im}(B)$, we start by formulating this condition as $\operatorname{rank}(A) = \operatorname{rank}(B)$. This then follows from: $\operatorname{rank}(A) \geq \operatorname{rank}(AC) = \operatorname{rank}(DB) \geq \operatorname{rank}(BDB) = \operatorname{rank}(B) \geq \operatorname{rank}(BD) = \operatorname{rank}(CA) \geq \operatorname{rank}(ACA) = \operatorname{rank}(A)$.

To show that $\operatorname{Im}(AB) = \operatorname{Im}(A)$: $\operatorname{Im}(AB) \subseteq \operatorname{Im}(A) = \operatorname{Im}(ACA) = A(\operatorname{Im}(CA)) = A(\operatorname{Im}(BD)) =  \operatorname{Im}(ABD) \subseteq \operatorname{Im}(AB)$.

To show that $\operatorname{Im}(BA) = \operatorname{Im}(B)$: $\operatorname{Im}(BA) \subseteq \operatorname{Im}(B) = \operatorname{Im}(BDB) = B(\operatorname{Im}(DB)) = B(\operatorname{Im}(AC)) =  \operatorname{Im}(BAC) \subseteq \operatorname{Im}(BA)$.

We are done.
\end{proof}

One theme of this paper is that matrix operations over double
matrices reduce to familiar matrix operations over real matrices. The
axiomatic definition of the double pseudoinverse is quite
complicated and unintuitive, which naturally leads us to look for a
natural meaning to the double pseudoinverse.

Our ``intuitive'' description of the pseudoinverse of \(M = [A,B]\)
begins by assuming that \(A\) and \(B\) can be described using certain
diagrams. This is the diagram for \(A\): \[\begin{aligned}
\operatorname{Im}(B) & \xrightarrow[f]{\sim} & \operatorname{Im}(A) \\
\oplus & & \oplus \\
\ker(A) & \xrightarrow[g]{} & \ker(B) \\
\end{aligned}\] where \(f\) is an isomorphism and \(g(v) = 0\) for every
\(v\).

What do we mean by the above being ``the diagram'' for \(A\)? It means
that given \(v \in \operatorname{Im}(B)\) and \(w \in \ker(A)\) we have
that \(A(v + w) = f(v) + g(w)\). The diagram also indicates, using the
\(\sim\) symbol, that \(f\) is an isomorphism.

This is the diagram for \(B\) (essentially the same as for \(A\)):
\[\begin{aligned}
\operatorname{Im}(A) & \xrightarrow[h]{\sim} & \operatorname{Im}(B) \\
\oplus & & \oplus \\
\ker(B) & \xrightarrow[i]{} & \ker(A) \\
\end{aligned}\] where \(h\) is an isomorphism and \(i(v) = 0\) for every
\(v\). The symbol \(i\) has no special significance; it's simply the
next letter of the alphabet after \(h\).

A question might arise about what happens if \(A\) and \(B\) cannot be
matched to the above two diagrams. In that case, \([A,B]\) does not have
a pseudoinverse.

We now show how to obtain the pseudoinverse from the above two diagrams.
Essentially, you ``reverse'' all the arrows. Let \(M^+ = [C,D]\). This
is the diagram for \(C\): \[\begin{aligned}
\operatorname{Im}(B) & \xleftarrow[f^{-1}]{\sim} & \operatorname{Im}(A) \\
\oplus & & \oplus \\
\ker(A) & \xleftarrow[g^+]{} & \ker(B) \\
\end{aligned}\] where \(g^+(v) = 0\) for every \(v\).

And here is the diagram for \(D\): \[\begin{aligned}
\operatorname{Im}(A) & \xleftarrow[h^{-1}]{\sim} & \operatorname{Im}(B) \\
\oplus & & \oplus \\
\ker(B) & \xleftarrow[i^+]{} & \ker(A) \\
\end{aligned}\] where \(i^+(v) = 0\) for every \(v\).

This completes our description. What follows is a verification that this
description is equivalent to the axiomatic definition of the
pseudoinverse:

\begin{theorem} Let $M = [A,B]$. Assume that $M$ has a pseudoinverse. Let $M^+ = [C,D]$. Then we have that $A$ and $B$ are described by the first two diagrams, and $C$ and $D$ are described by the last two diagrams. \end{theorem}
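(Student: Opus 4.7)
The plan is to expand the four pseudoinverse axioms componentwise, then verify each piece of the diagrams using the resulting identities together with lemma \ref{necessary-condition}. Expanding $MM^+M=M$, $M^+MM^+=M^+$, $(MM^+)^*=MM^+$, and $(M^+M)^*=M^+M$ gives the six identities already used in the proof of that lemma: $ACA=A$, $BDB=B$, $CAC=C$, $DBD=D$, $AC=DB$ and $CA=BD$. From the lemma itself I already have $\operatorname{Im}(AB)=\operatorname{Im}(A)$, $\operatorname{Im}(BA)=\operatorname{Im}(B)$, and $\operatorname{rank}(A)=\operatorname{rank}(B)$.

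The first real step, and what I expect to be the only nontrivial one, is to establish the two direct sum decompositions of the ambient space: $\mathbb{C}^n = \operatorname{Im}(B) \oplus \ker(A)$ and $\mathbb{C}^n = \operatorname{Im}(A) \oplus \ker(B)$. The dimensions add up to $n$ thanks to $\operatorname{rank}(A)=\operatorname{rank}(B)$, so it suffices to show each intersection is trivial. For the first decomposition, if $v = Bu \in \operatorname{Im}(B)$ and $Av=0$, then $v = BDBu = BDv = CAv = 0$, using $BDB=B$ and $CA=BD$. The other decomposition follows by the same argument after swapping $A \leftrightarrow B$ and $C \leftrightarrow D$, which is a symmetry of the six identities.

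Next, I would read off the diagram for $A$ from what we now know. The restriction $A|_{\operatorname{Im}(B)}$ lands in $\operatorname{Im}(A)$; it is surjective because $A(\operatorname{Im}(B)) = \operatorname{Im}(AB) = \operatorname{Im}(A)$ by the lemma, and it is injective because $\operatorname{Im}(B) \cap \ker(A) = 0$ from the previous step. This gives the isomorphism $f$. The map $A|_{\ker(A)}$ is identically zero into $\ker(B)$, which is exactly what the diagram demands of $g$. The argument for $B$ is completely symmetric.

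Finally, I would verify the diagrams for $C$ and $D$ using the same axiomatic identities. For $C$ on $\operatorname{Im}(A)$: given $v = Au$, we have $Cv = CAu = BDu \in \operatorname{Im}(B)$ using $CA=BD$, and $A(Cv) = ACAu = Au = v$, so $C|_{\operatorname{Im}(A)}$ is a two-sided inverse of $f$, hence equal to $f^{-1}$. For $C$ on $\ker(B)$: if $w \in \ker(B)$, then $Cw = CACw = C(ACw) = C(DBw) = 0$, using $CAC=C$, $AC=DB$, and $Bw=0$, so $g^+$ really is the zero map. The verification for $D$ is identical up to swapping letters, and the theorem is complete. Once the direct sum decompositions are in hand, the rest is pure bookkeeping with the six identities.
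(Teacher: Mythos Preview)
Your proof is correct and follows essentially the same approach as the paper's: expand the axioms componentwise, use lemma~\ref{necessary-condition} plus the resulting identities to get the two direct-sum decompositions, and then read off the four diagrams. The only cosmetic differences are in the exact chains of equalities (for instance, your intersection argument $v=BDBu=BDv=CAv=0$ versus the paper's $ABu=0\Rightarrow CABu=0\Rightarrow BDBu=0\Rightarrow Bu=0$), which are equivalent rearrangements of the same identities.
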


\begin{proof}

We expand out the four axioms of the pseudoinverse:
\begin{enumerate}
    \item $ACA = A$ and $BDB = B$
    \item $CAC = C$ and $DBD = D$
    \item $DB = AC$
    \item $BD = CA$
\end{enumerate}

We show that $A$ matches the description in the first diagram.

The first step is to show that $V = \operatorname{Im}(B) \oplus \ker(A)$, where $V$ is the vector space we're working over (that is, the domain and codomain of $A$). By the rank-nullity theorem, we have that $\dim(V) = \dim(\operatorname{Im}(A)) + \dim(\ker(A))$. We already know that $\operatorname{Im}(A) \cong \operatorname{Im}(B)$ (by lemma \ref{necessary-condition}), so we have that $\dim(V) = \dim(\operatorname{Im}(B)) + \dim(\ker(A))$. To strengthen that to $V = \operatorname{Im}(B) \oplus \ker(A)$, we need to show that $\operatorname{Im}(B) \cap \ker(A) = \{0\}$. Let $v \in \operatorname{Im}(B) \cap \ker(A)$. Since $v \in \ker(A)$, we have that $Av = 0$. Since $v \in \operatorname{Im}(B)$, we get that there is a $u$ such that $v = Bu$. We therefore get that $ABu = 0$. We can weaken that to $CABu = 0$, and then by equation 4, $BDBu=0$, and then by equation 1, $Bu = 0$, and therefore that $v = 0$ as we intended to show. We conclude that $V = \operatorname{Im}(B) \oplus \ker(A)$.

It can be shown that $V = \operatorname{Im}(A) \oplus \ker(B)$ in the same way.

Let $f : \operatorname{Im}(B) \to \operatorname{Im}(A)$ by defined by $f(v) = Av$ and $g : \ker(A) \to \ker(B)$ be defined by $g(v) = 0$. Given a $u \in w$, it can be written as $u = v + w$ where $v \in \operatorname{Im}(B)$ and $w \in \ker(A)$, we have that $Au = A(v + w) = Av + Aw = f(v) + 0 = f(v) + g(w)$. So we therefore have that $A = f \oplus g$. It remains to show that $f$ is an isomorphism. This follows from the fact that $f$ is clearly surjective and $\operatorname{Im}(A) \cong \operatorname{Im}(B)$ (by lemma \ref{necessary-condition}).

We conclude therefore that $A$ matches the first diagram. The argument for $B$ matching the second diagram is similar, so we don't include it.

We want to show that $C$ matches the third diagram.

We know that $A = f \oplus g$. We want to show that $C = f^{-1} \oplus g^+$. Let $F : \operatorname{Im}(A) \to \operatorname{Im}(B)$ be defined by $F(v) = Cv$. We are allowed to let the codomain of $F$ be $\operatorname{Im}(B)$ because of equation 4: $BD = CA$ implies that $\operatorname{Im}(B) \supseteq C(\operatorname{Im}(A)) = \operatorname{Im}(F)$, which justifies giving $F$ its codomain. Let $G : \ker(B) \to \ker(A)$ be defined by $G(v) = Cv$. We are allowed to let the codomain of $G$ be $\ker(A)$ because of equation 3: Let $v$ be an element of $\ker(B)$; we have that $DBv = ACv$ which implies that $0 = A(G(v))$ which implies that $\operatorname{Im}(G) \subseteq \ker(A)$, which justifies giving $G$ its codomain.

The above shows that $C = F \oplus G$. We now aim to show that $F = f^{-1}$ and $G(v) = 0$ for every $v$.

We want to show that $F = f^{-1}$. It suffices to show that $f(F(v)) = v$ for every $v \in \operatorname{Im}(A)$. Let $v \in \operatorname{Im}(A)$. We have that there is a $u$ such that $v = Au$. We thus have (by equation 1) that $f(F(v)) = f(F(Au)) = ACAu = Au = v$. This is what we intended to show.

We now show that $G(v) = 0$ for every $v \in \ker(B)$. Let $v \in \ker(B)$. We have by equation 2 that $G(v) = Cv = CACv$; and since $Cv$ is in $\ker(A)$, we have that $CACv = 0$, so $G(v) = 0$.

We finally conclude that $C = f^{-1} \oplus g^{+}$. The argument for $D$ matching the fourth diagram is the same. We are done.
\end{proof}

We now prove the converse theorem.

\begin{theorem} \label{diagram-implies-pseudoinverse} If $M = [A,B]$ and $K = [C,D]$ match the above four diagrams, then $K = M^+$. \end{theorem}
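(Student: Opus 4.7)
The plan is to translate the diagrammatic description of $A$, $B$, $C$, $D$ into block-operator form relative to the two direct-sum decompositions $V = \operatorname{Im}(B) \oplus \ker(A)$ and $V = \operatorname{Im}(A) \oplus \ker(B)$ that are built into the hypothesis, then verify the four pseudoinverse axioms by a direct block computation, and finally invoke uniqueness of the pseudoinverse to conclude.

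First I would record the block forms. With source $\operatorname{Im}(B) \oplus \ker(A)$ and target $\operatorname{Im}(A) \oplus \ker(B)$, the first diagram gives $A = f \oplus 0$ with $f$ an isomorphism, and the fourth diagram gives $D = h^{-1} \oplus 0$. Going the other direction, with source $\operatorname{Im}(A) \oplus \ker(B)$ and target $\operatorname{Im}(B) \oplus \ker(A)$, the second diagram gives $B = h \oplus 0$ with $h$ an isomorphism, and the third diagram gives $C = f^{-1} \oplus 0$. So $A$ and $D$ go one way between the two decompositions, while $B$ and $C$ go the other way.

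Next I would compute the four compositions $AC$, $DB$, $CA$, $BD$ directly from these block forms. Because $f^{-1}$ inverts $f$ and $h^{-1}$ inverts $h$, and the zero blocks annihilate the kernel summands, both $AC$ and $DB$ reduce to $\operatorname{id}_{\operatorname{Im}(A)} \oplus 0$ on $\operatorname{Im}(A) \oplus \ker(B)$, i.e.\ they are the projection onto $\operatorname{Im}(A)$ along $\ker(B)$; symmetrically, both $CA$ and $BD$ reduce to the projection onto $\operatorname{Im}(B)$ along $\ker(A)$. This gives axioms $3$ and $4$ at once, since $AC = DB$ and $CA = BD$.

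For axioms $1$ and $2$, I would observe that $ACA$ is $A$ followed by the projection onto $\operatorname{Im}(A)$; since $A$ already lands in $\operatorname{Im}(A)$, the projection acts as the identity on its image and so $ACA = A$. The identities $BDB = B$, $CAC = C$, $DBD = D$ follow in exactly the same way. Having verified all four axioms, I would conclude $K = M^+$ by the uniqueness of the pseudoinverse noted earlier in this section. There is no substantive obstacle; the only thing requiring care is keeping straight which decomposition serves as source and which as target for each of $A$, $B$, $C$, $D$ at every stage of the block calculation.
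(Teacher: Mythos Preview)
Your proposal is correct and takes essentially the same approach as the paper: both verify the four pseudoinverse axioms directly from the diagrammatic decompositions, with the paper doing the computations elementwise on a generic $v+w$ while you phrase the same calculations in block-operator language (identifying $AC=DB$ and $CA=BD$ as the relevant projections). The final appeal to uniqueness is harmless but unnecessary, since verifying the four axioms is exactly what $K=M^+$ means.
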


\begin{proof}

Once again, we expand out the four axioms of the pseudoinverse:

\begin{enumerate}
    \item $ACA = A$ and $BDB = B$
    \item $CAC = C$ and $DBD = D$
    \item $DB = AC$
    \item $BD = CA$
\end{enumerate}

We aim to show that each of these axioms is satisfied in turn.

We begin with axiom 1. Let $v \in \operatorname{Im}(B)$ and $w \in \ker(A)$. We have that $ACA(v + w) = AC(f(v) + g(w)) = AC(f(v)) = A(f^{-1}(f(v))) = Av = Av + Aw = A(v + w)$. Since $ACA(v + w) = A(v + w)$ is true for every $v$ and $w$, we have that $ACA = A$. The argument for showing that $BDB = B$ is identical.

We continue with axiom 2. Let $v \in \operatorname{Im}(A)$ and $w \in \ker(B)$. We have that $CAC(v + w) = CA(f^{-1}(v) + g^+(w)) = CA(f^{-1}(v)) = C(f(f^{-1}(v))) = Cv = f^{-1}(v) + g^+(w) = C(v + w)$. Since $CAC(v+w) = C(v+w)$ is true for every $v$ and $w$, we have that $CAC = C$. The argument for showing that $DBD = D$ is identical.

We continue with axiom 3. Let $v \in \operatorname{Im}(B)$ and $w \in \ker(A)$. We have that $AC(v + w) = A(f^{-1}(v) + g^+(w)) = A(f^{-1}(v)) = f(f^{-1}(v)) = v$ and that $DB(v + w) = D(h(v) + i(w)) = D(h(v)) = h^{-1}(h(v)) = v$. Since $AC(v + w) = DB(v + w)$ is true for every $v$ and $w$, we have that $AC = DB$.

We continue with axiom 4. Let $v \in \operatorname{Im}(A)$ and $w \in \ker(B)$. We have that $CA(v + w) = C(f(v) + g(w)) = C(f(v)) = f^{-1}(f(v)) = v$ and that $BD(v + w) = B(h^{-1}(v) + i^{+}(w)) = B(h^{-1}(v)) = h(h^{-1}(v)) = v$. Since $BD(v+w) = CA(v+w)$ is true for every $v$ and $w$, we have that $BD = CA$.
\end{proof}

We finally prove a necessary and sufficient condition for the
pseudoinverse to exist.

\begin{theorem} The matrix $M = [A,B]$ has a pseudoinverse if and only if $\operatorname{Im}(AB) = \operatorname{Im}(A) \cong \operatorname{Im}(B) = \operatorname{Im}(BA)$. \end{theorem}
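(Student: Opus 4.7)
The forward direction is exactly Lemma \ref{necessary-condition}, so the plan is to concentrate on the converse: assuming $\operatorname{Im}(AB) = \operatorname{Im}(A) \cong \operatorname{Im}(B) = \operatorname{Im}(BA)$, construct $M^+$. The strategy is to avoid the axioms directly and instead build the diagrams for $A$ and $B$ described earlier, define $C$ and $D$ via the reversed diagrams, and then invoke Theorem \ref{diagram-implies-pseudoinverse} to conclude that $[C,D] = M^+$.

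The main work is thus verifying that $A$ fits its diagram; $B$ is symmetric. Concretely, I need to exhibit a decomposition $V = \operatorname{Im}(B) \oplus \ker(A)$ and an isomorphism $f : \operatorname{Im}(B) \xrightarrow{\sim} \operatorname{Im}(A)$ given by $f(v) = Av$. The key observation is that the hypothesis $\operatorname{Im}(AB) = \operatorname{Im}(A)$ says exactly that the restriction $f := A|_{\operatorname{Im}(B)} : \operatorname{Im}(B) \to \operatorname{Im}(A)$ is surjective. Combining this with $\operatorname{Im}(A) \cong \operatorname{Im}(B)$, so that $\dim \operatorname{Im}(B) = \dim \operatorname{Im}(A)$, a dimension count forces $f$ to be an isomorphism and in particular injective, which is the same as $\operatorname{Im}(B) \cap \ker(A) = \{0\}$. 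Then by rank-nullity $\dim V = \dim \operatorname{Im}(A) + \dim \ker(A) = \dim \operatorname{Im}(B) + \dim \ker(A)$, so the intersection being trivial upgrades the sum to a direct sum $V = \operatorname{Im}(B) \oplus \ker(A)$. With $g : \ker(A) \to \ker(B)$ defined as the zero map, every $u = v + w \in V$ satisfies $Au = Av = f(v) + g(w)$, so $A$ matches its diagram. The analogous argument with $\operatorname{Im}(BA) = \operatorname{Im}(B)$ handles $B$.

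Having established both diagrams for $A$ and $B$, I define $C : V \to V$ by $C(Av + w') = f^{-1}(Av) + 0$ for $Av \in \operatorname{Im}(A)$ and $w' \in \ker(B)$ (using the decomposition $V = \operatorname{Im}(A) \oplus \ker(B)$), and similarly $D$ from the diagram for $B$. These are exactly the reversed diagrams. Setting $K = [C,D]$, Theorem \ref{diagram-implies-pseudoinverse} immediately yields $K = M^+$, completing the proof.

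I do not expect a real obstacle here: the forward direction is given, and the converse essentially amounts to the observation that the two image identities $\operatorname{Im}(AB) = \operatorname{Im}(A)$ and $\operatorname{rank}(A) = \operatorname{rank}(B)$ together force $A$ to act as a bijection from $\operatorname{Im}(B)$ to $\operatorname{Im}(A)$, which is precisely the structural content of the diagrams. The only thing to be careful about is invoking the right hypothesis at the right moment (surjectivity of $f$ from $\operatorname{Im}(AB) = \operatorname{Im}(A)$; equality of dimensions from $\operatorname{Im}(A) \cong \operatorname{Im}(B)$); once these are in place everything else is routine linear algebra plus citation of Theorem \ref{diagram-implies-pseudoinverse}.
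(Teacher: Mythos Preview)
Your proposal is correct and follows essentially the same approach as the paper's proof: cite Lemma \ref{necessary-condition} for the forward direction, and for the converse verify that $A$ (and symmetrically $B$) fits its diagram by using $\operatorname{Im}(AB)=\operatorname{Im}(A)$ for surjectivity of $f=A|_{\operatorname{Im}(B)}$, the rank equality for injectivity, and rank--nullity for the direct-sum decomposition, then invoke Theorem \ref{diagram-implies-pseudoinverse}. The only cosmetic difference is ordering: the paper first establishes $\operatorname{Im}(B)\cap\ker(A)=\{0\}$ directly and then does the dimension count, whereas you deduce the trivial intersection from the injectivity of $f$; both are equivalent and equally short.
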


\begin{proof}

Lemma \ref{necessary-condition} already shows that if $M$ has a pseudoinverse then it must satisfy those conditions. So we prove the converse statement.

By theorem \ref{diagram-implies-pseudoinverse}, we only need to show that $A$ matches the following diagram:
$$\begin{aligned}
\operatorname{Im}(B) & \xrightarrow[f]{\sim} & \operatorname{Im}(A) \\
\oplus & & \oplus \\
\ker(A) & \xrightarrow[g]{} & \ker(B) \\
\end{aligned}$$
The argument for $B$ is similar, so we don't include it.

Let $f : \operatorname{Im}(B) \to \operatorname{Im}(A)$ be defined by $f(v) = Av$. The codomain is correct because (by lemma \ref{necessary-condition}) $\operatorname{Im}(A) = \operatorname{Im}(AB) = A(\operatorname{Im}(B)) = \operatorname{Im}(f)$. Since $\operatorname{Im}(A) = \operatorname{Im}(f)$, we have that $f$ is an isomorphism.

Let $g : \ker(A) \to \ker(B)$ be defined by $g(v) = 0$.

We now need to show that $V = \operatorname{Im}(B) \oplus \ker(A)$ where $V$ is the vector space we're working over.

First, we need to show that if $v \in \operatorname{Im}(B) \cap \ker(A)$ then $v = 0$. Let $v \in \operatorname{Im}(B) \cap \ker(A)$. We have that there is a $u$ such that $v = Bu$. We thus have that $0 = Av = ABu$. We know that $A$ is one-to-one map over $\operatorname{Im}(B)$, so $ABu = 0$ implies that $Bu = 0$. We therefore have that $v = 0$.

Lastly, we need to show that $\dim(V) = \operatorname{rank}(B) + \operatorname{null}(A)$. By the rank-nullity theorem, we have that $\dim(V) = \operatorname{rank}(A) + \operatorname{null}(A)$ and $\dim(V) = \operatorname{rank}(B) + \operatorname{null}(B)$. Since $\operatorname{rank}(A) = \operatorname{rank}(B)$ (by lemma \ref{necessary-condition}), we have that $\operatorname{null}(A) = \operatorname{null}(B)$. We finally conclude $\dim(V) = \operatorname{rank}(B) + \operatorname{null}(A)$.

Let $v \in \operatorname{Im}(B)$ and $w \in \ker(A)$. We have that $A(v + w) = Av + Aw = f(v) + 0 = f(v) + g(w)$. We are done.
\end{proof}

\begin{corollary} \label{rank-necessary-and-sufficient} The matrix $M = [A,B]$ has a pseudoinverse if and only if $\operatorname{rank}(AB) = \operatorname{rank}(A) = \operatorname{rank}(B) = \operatorname{rank}(BA)$. \end{corollary}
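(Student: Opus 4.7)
The plan is to invoke the preceding theorem, which characterises existence of the pseudoinverse by the image condition $\operatorname{Im}(AB) = \operatorname{Im}(A) \cong \operatorname{Im}(B) = \operatorname{Im}(BA)$, and then simply show that this condition is equivalent to the stated rank equalities. Nothing new about pseudoinverses has to be said: the corollary is a translation between a statement about subspaces and a statement about their dimensions.

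The key observations are the two trivial inclusions $\operatorname{Im}(AB) \subseteq \operatorname{Im}(A)$ and $\operatorname{Im}(BA) \subseteq \operatorname{Im}(B)$, which hold for any matrices because $(AB)v = A(Bv)$ and $(BA)v = B(Av)$. Whenever one finite-dimensional subspace is contained in another, equality of the two subspaces is equivalent to equality of their dimensions. Hence $\operatorname{Im}(AB) = \operatorname{Im}(A)$ is equivalent to $\operatorname{rank}(AB) = \operatorname{rank}(A)$, and likewise $\operatorname{Im}(BA) = \operatorname{Im}(B)$ is equivalent to $\operatorname{rank}(BA) = \operatorname{rank}(B)$. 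The remaining clause $\operatorname{Im}(A) \cong \operatorname{Im}(B)$ is, for finite-dimensional vector spaces over a field, just equality of dimensions, namely $\operatorname{rank}(A) = \operatorname{rank}(B)$.

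Chaining these three equivalences, the image condition from the preceding theorem is equivalent to the conjunction $\operatorname{rank}(AB) = \operatorname{rank}(A)$, $\operatorname{rank}(A) = \operatorname{rank}(B)$ and $\operatorname{rank}(BA) = \operatorname{rank}(B)$, which is exactly $\operatorname{rank}(AB) = \operatorname{rank}(A) = \operatorname{rank}(B) = \operatorname{rank}(BA)$. Combining this with the preceding theorem yields the corollary.

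There is no real obstacle here; the only thing to be careful about is stating the subspace-versus-dimension argument cleanly so as not to obscure how trivial the step is. Since the preceding theorem has already done all the substantive work, the proof should be no more than a few lines.
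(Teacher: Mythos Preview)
Your proposal is correct and matches the paper's approach: the paper states the corollary without proof, treating it as immediate from the preceding theorem, and your argument spells out precisely the trivial translation between the image condition and the rank condition via the inclusions $\operatorname{Im}(AB)\subseteq\operatorname{Im}(A)$, $\operatorname{Im}(BA)\subseteq\operatorname{Im}(B)$ and the fact that isomorphism of finite-dimensional spaces is equality of dimension.
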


\hypertarget{relationship-between-the-pseudoinverse-and-the-jordan-svd}{%
\section{Relationship between the pseudoinverse and the Jordan
SVD}\label{relationship-between-the-pseudoinverse-and-the-jordan-svd}}

The following propositions establish that if the pseudoinverse of a
double-complex matrix \(M\) exists then so does the Jordan SVD. It also
supplies a method for finding the Jordan SVD of a matrix under those
conditions. This method works even when the value of the pseudoinverse
isn't known. It therefore gives us a way to \emph{find} the
pseudoinverse if we believe that one exists. The method is to find the
Jordan SVD of \(M\), which is \(M = U[J,J]V^*\), and then conclude that
\(M^+ = V [J^+,J^+] U^*\). This is an adaptation of a common method for
finding the pseudoinverse of a real or complex matrix. We add that assuming that our assumption that $M$ has a pseudoinverse is true, the Jordan matrix $J$ will not have any non-trivially nilpotent Jordan blocks.

Note that there are matrices which have Jordan SVDs but
which don't have pseudoinverses. An example is \([J,J]\) where
\(J = \begin{bmatrix}0 & 1 \\ 0 & 0\end{bmatrix}\).

\begin{lemma} \label{square-root} If $M = [A,B]$ has a pseudoinverse then $\sqrt{AB}$ exists. Additionally, we can pick this square root in such a way that its Jordan Normal Form has no non-trivially nilpotent Jordan blocks. \end{lemma}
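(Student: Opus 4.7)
The plan is to use the diagrammatic characterisation of the pseudoinverse from the previous theorem to decompose $AB$ as a direct sum of an invertible operator and a zero operator, and then take the square root blockwise.

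First, I would recall from the diagrams that, under the decomposition $V = \operatorname{Im}(A) \oplus \ker(B)$, we have $B = h \oplus i$, where $h : \operatorname{Im}(A) \to \operatorname{Im}(B)$ is an isomorphism and $i : \ker(B) \to \ker(A)$ is the zero map. Similarly, under $V = \operatorname{Im}(B) \oplus \ker(A)$, we have $A = f \oplus g$ with $f : \operatorname{Im}(B) \to \operatorname{Im}(A)$ an isomorphism and $g : \ker(A) \to \ker(B)$ the zero map. Writing $v = v_1 + v_2$ with $v_1 \in \operatorname{Im}(A)$ and $v_2 \in \ker(B)$, a direct computation gives $Bv = h(v_1) \in \operatorname{Im}(B)$, to which applying $A$ yields $ABv = f(h(v_1)) \in \operatorname{Im}(A)$. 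Hence, relative to $V = \operatorname{Im}(A) \oplus \ker(B)$, the operator $AB$ has the block form $(f \circ h) \oplus 0$, and the restriction $T := f \circ h : \operatorname{Im}(A) \to \operatorname{Im}(A)$ is a composition of two isomorphisms, hence itself invertible.

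Next, since $T$ is an invertible endomorphism of a finite-dimensional complex vector space, it admits a square root $S$ with no zero eigenvalues. Concretely, taking a Jordan decomposition $T = P J P^{-1}$ and applying the principal square root to each Jordan block $J_k(\lambda)$ (legitimate since $\lambda \neq 0$) produces such an $S$, every Jordan block of which has a nonzero eigenvalue. Define $\sqrt{AB} := S \oplus 0$ relative to $V = \operatorname{Im}(A) \oplus \ker(B)$; squaring gives $S^2 \oplus 0 = T \oplus 0 = AB$, as required. Its Jordan form consists of the Jordan blocks of $S$ (all with nonzero eigenvalues, so none nilpotent) together with $\dim \ker(B)$ trivial $1 \times 1$ zero blocks, so no non-trivially nilpotent Jordan blocks appear.

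The step carrying the real content is the decomposition $AB = T \oplus 0$ with $T$ invertible; once the diagrams from the previous theorem are in hand this is essentially immediate, and the square-root construction is entirely classical. I would not expect any serious obstacle — the work is really all done by the diagrammatic characterisation, which pins down exactly the structural information about $A$ and $B$ that makes $AB$ split as invertible-plus-zero.
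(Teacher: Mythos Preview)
Your proposal is correct and follows essentially the same route as the paper: use the diagrammatic characterisation to write $AB = (f \circ h) \oplus 0$ relative to $V = \operatorname{Im}(A) \oplus \ker(B)$, take a square root of the invertible piece $f \circ h$, and pair it with the zero map on $\ker(B)$. The paper phrases the decomposition as $AB(v+w) = f(h(v)) + g(i(w))$ with $g \circ i = 0$, which amounts to the same thing, and argues the absence of non-trivially nilpotent blocks via a Jordan basis for each summand; your version is slightly more explicit but not substantively different.
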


\begin{proof}

Since $[A,B]$ has a pseudoinverse, both $A$ and $B$ match the required diagrams. A diagram for $AB$ can be drawn by combining the diagrams for $A$ and $B$:

$$\begin{aligned}
\operatorname{Im}(A) & \xrightarrow[h]{\sim} & \operatorname{Im}(B) & \xrightarrow[f]{\sim} & \operatorname{Im}(A)\\
\oplus & & \oplus & & \oplus \\
\ker(B) & \xrightarrow[i]{} & \ker(A) & \xrightarrow[g]{} & \ker(B)\\
\end{aligned}$$

In other words, we have that $AB(v + w) = f(h(v)) + g(i(w))$ for any $v \in \operatorname{Im}(A)$ and $w \in \ker(B)$. Since $f \circ h$ is an automorphism, and all automorphisms have square roots (by appeal to the Jordan Normal Form) we have that $\sqrt{f \circ h}$ exists. Similarly, since $g \circ i$ is the zero map over $\ker(B)$, we have that $\sqrt{g \circ i}$ exists, and equals $g \circ i$. Therefore we have that $\sqrt{AB}(v + w) = \sqrt{f \circ h}(v)$.

The above definition of $\sqrt{AB}$ is actually one of possibly many square roots of $AB$. This one has the property that its Jordan Normal Form has no non-trivially nilpotent Jordan blocks. We prove this below.

Find a Jordan basis for $\sqrt{f \circ h}$, which we will denote as $B_{\sqrt{f \circ h}}$. Find a Jordan for $g \circ i$, which we will denote as $B_{g \circ i}$. We have that $B_{\sqrt{f \circ h}} \cup B_{g \circ i}$ is a Jordan basis for $\sqrt{AB}$. Since $\sqrt{f \circ h}$ is an automorphism, its JNF won't have any non-trivially nilpotent Jordan blocks. Since $g \circ i$ is the zero map, its JNF also hasn't got any non-trivially nilpotent Jordan blocks. We are done.
\end{proof}

One of the steps in finding the Jordan SVD of a matrix (under the
assumption that a matrix has a pseudoinverse) is to extend an
orthonormal set of vectors \(S\) to an orthonormal basis. Classical
linear algebra (over fields) suggests a way to do this. The procedure is
to pick a random vector \(w\) and perform the \emph{Gram-Schmidt
process} to produce a vector \(w'\) which is orthogonal to every vector
in \(S\). For our purposes, we define the Gram-Schmidt procedure to be
\(w' = w - \sum_{v \in S} \langle v, w \rangle v\). The procedure works
as long as \(w'\) does not have zero norm. If hopefully \(w'\) has
non-zero norm, then we can normalise it and add it to our set \(S\). In
classical linear algebra, it suffices to choose a \(w\) which is not in
the span of \(S\), but over the double-complex numbers this is not sufficient
because a non-zero vector can have zero norm. We deal with this
complication by showing that a \emph{random} choice of \(w\) will almost
surely result (with probability \(1\)) in a \(w'\) which will have
non-zero norm. In particular, this implies that a good choice of \(w'\)
must exist. The probability distribution which we use to pick \(w\) can
have as its support the entire space. This suggests a feasible (if
arguably crude) algorithm for finding \(w'\):

\begin{enumerate}
\def\labelenumi{\arabic{enumi}.}

\item
  Pick a random vector \(w\) using any probability distribution whose
  support is the entire space.
\item
  Perform Gram-Schmidt to produce a vector \(w'\) which is orthogonal to
  every vector in \(S\). Concretely, let
  \(w' = w - \sum_{v \in S} \langle v, w\rangle v\).
\item
  If \(w'\) has zero norm (which has probability \(0\) of happening)
  then go back to step 1.
\item
  Normalise \(w'\) and finish.
\end{enumerate}

Step 3 states that if the choice of \(w\) is a bad one (which should be
apparent after performing step 2) then a different random \(w\) should
be chosen. The probability of needing to perform this repetition is
\(0\).

Our claim that \(w'\) will almost surely have non-zero norm can be
justified using simple algebraic geometry. This is done in the proof
below:

\begin{lemma} \label{orthonormal}

If $\{u_1, u_2, \dotsc, u_n\}$ is an orthonormal set of $d$-dimensional double-complex vectors where $d \geq n$, then this can be extended to form an orthonormal basis $\{u_1, u_2, \dotsc, u_n, u_{n+1}, \dotsc, u_d\}$.

\end{lemma}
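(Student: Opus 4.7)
The plan is to induct on $d - n$, the inductive step being to produce a single unit vector orthogonal to $\{u_1, \dotsc, u_n\}$ whenever $n < d$. Following the hint in the preamble, I would abandon the $[A,B]$ notation and represent each double-complex vector as an ordered pair $v = (a, b)$ with $a, b \in \mathbb{C}^d$; scalar multiplication by $(\alpha, \beta) \in \mathbb{C} \oplus \mathbb{C}$ is then componentwise, and unpacking $v^* w$ via the product rule $[A,B][C,D] = [AC, DB]$ gives the inner product $\langle (a,b),(c,d)\rangle = (b \cdot c,\,a \cdot d)$, where $\cdot$ is the ordinary (symmetric) bilinear dot product on $\mathbb{C}^d$. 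Orthonormality $\langle u_i, u_j\rangle = \delta_{ij}$ then translates into the relations $a_i \cdot b_j = \delta_{ij}$, which in particular force each of the families $\{a_i\}$ and $\{b_i\}$ to be linearly independent in $\mathbb{C}^d$.

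The base case $n = d$ would be immediate: any relation $\sum_i (\alpha_i, \beta_i)(a_i, b_i) = 0$ decouples into $\sum \alpha_i a_i = 0$ and $\sum \beta_i b_i = 0$, each of which forces all coefficients to vanish, so $\{(a_i, b_i)\}_{i=1}^d$ is already a free module basis of $(\mathbb{C} \oplus \mathbb{C})^d$. For the inductive step with $n < d$, I would seek $u_{n+1} = (a, b)$ satisfying $b_i \cdot a = 0$, $a_i \cdot b = 0$ and $a \cdot b = 1$. The first two sets of conditions confine $a$ to $A_\perp := \{x \in \mathbb{C}^d : x \cdot b_i = 0\;\forall i\}$ and $b$ to $B_\perp := \{y \in \mathbb{C}^d : y \cdot a_i = 0\;\forall i\}$, both $(d - n)$-dimensional subspaces by the linear independence above. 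The problem reduces to producing $a \in A_\perp, b \in B_\perp$ with $a \cdot b \neq 0$; a scalar rescaling of $a$ then enforces the norm condition.

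The only genuinely interesting step, and the one I would expect to be the main obstacle, is showing that the bilinear form $(a, b) \mapsto a \cdot b$ is not identically zero on $A_\perp \times B_\perp$. Non-degeneracy of the dot product on $\mathbb{C}^d$ makes this equivalent to $A_\perp \not\subseteq \operatorname{span}(a_1, \dotsc, a_n)$. To verify this, I would suppose $x = \sum c_k a_k$ lies in $A_\perp$; applying $\cdot\,b_i$ to both sides and using $a_k \cdot b_i = \delta_{ki}$ forces each $c_i = 0$, so $A_\perp \cap \operatorname{span}(a_1, \dotsc, a_n) = \{0\}$, and since $\dim A_\perp = d - n \geq 1$ the required non-inclusion follows. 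This also retroactively vindicates the probabilistic algorithm stated just before the lemma: the set of Gram-Schmidt inputs $w$ for which $w' = w - \sum \langle u_i, w\rangle u_i$ has zero norm is cut out by a single polynomial on the ambient vector space which the argument above shows to be not identically zero, hence a proper Zariski-closed subvariety of Lebesgue measure zero.
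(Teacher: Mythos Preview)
Your proof is correct and takes a genuinely different route from the paper's. The paper argues via Gram--Schmidt: it writes $\langle w', w'\rangle$ explicitly as a polynomial in the components of $w$, simplifies it to $\sum_{j,k} w_j^{(1)} w_k^{(2)}\bigl(\delta_{jk} - \sum_i u_{ij}^{(2)} u_{ik}^{(1)}\bigr)$, and then observes that this polynomial can vanish identically only if $(U^{(2)})^T U^{(1)} = I_d$, which is impossible on rank grounds since the $U^{(\ell)}$ are $n \times d$ with $n < d$. The existence of a good $w$ then follows from the measure-zero principle for zero sets of nonzero polynomials.

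You instead bypass Gram--Schmidt and the polynomial computation entirely: after translating orthonormality into the biorthogonality relations $a_i \cdot b_j = \delta_{ij}$ on $\mathbb{C}^d$, you reduce the inductive step to finding $a \in A_\perp$, $b \in B_\perp$ with $a \cdot b \neq 0$, and dispatch this by the clean observation that $A_\perp \cap \operatorname{span}(a_1,\dotsc,a_n) = \{0\}$ together with non-degeneracy of the dot product. This is more elementary and more transparent than the paper's argument; it needs no appeal to algebraic geometry or measure theory for the existence claim. The paper's approach, on the other hand, directly yields the quantitative statement that a \emph{random} $w$ succeeds with probability~$1$, which is what justifies the algorithm stated before the lemma; your final paragraph correctly notes that this still follows from your argument, since exhibiting one $w$ with $\langle w',w'\rangle \neq 0$ shows the relevant polynomial is not identically zero.
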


\begin{proof}

We will show how to extend the set by one vector. First, choose a vector $w$ and apply the Gram-Schmidt process to make it orthogonal to vectors $u_1$ up to $u_n$. Call the resulting vector $w'$. We must have that the square norm of $w'$ is non-zero. This is important because there are non-zero vectors over the double-complex numbers with zero norm (consider $(1,j)^T$ for instance). If $w'$ has non-zero norm then we may normalise it to a unit vector and add that to our set.

It suffices to show that some $w$ exists for which $w'$ has non-zero norm. We can in fact show a stronger result that almost all randomly chosen ``$w$''s will result in a $w'$ with non-zero norm. To do this, we resort to a simple principle in algebraic geometry. This principle states that the only multi-variate polynomial (for instance, denoted by $f(x,y,z)$) which is zero over a set of non-zero measure is the zero polynomial (\cite{roots}). The situation in which $w'$ has zero norm can be characterised by $w$ satisfying a polynomial equation.

First, we define $w'$ by $w' = w - \sum_{i=1}^n \langle w, u_i \rangle u_i$. The situation in which $w'$ is ``bad'' is when $\langle w', w'\rangle = 0$. We substitute our definition of $w'$ and simplify:

$$\begin{aligned}
&\langle w', w' \rangle = 0\\
\iff &\langle w, w \rangle - \sum_{i=1}^n |\langle w, u_i\rangle|^2 = 0\\
\iff & \sum_{i=1}^d |w_i|^2 - \sum_{i=1}^n \left| \sum_{j=1}^d w_j u_{ij}^* \right|^2 = 0
\end{aligned}$$

We then write $w_i = w_i^{(1)} \frac{1+j}{2} + w_i^{(2)} \frac{1-j}{2}$, $u_{ij} = u_{ij}^{(1)} \frac{1+j}2 + u_{ij}^{(2)} \frac{1-j}{2}$ and $u_{ij}^* = u_{ij}^{(2)} \frac{1+j}{2} + u_{ij}^{(1)} \frac{1-j}{2}$. We can then further simplify the above:

$$\begin{aligned}
& \sum_{i=1}^d (w_i^{(1)} w_i^{(2)}) - \sum_{i=1}^n \left[ \left(\sum_{j=1}^d w_j^{(1)} u_{ij}^{(2)}\right) \left( \sum_{k=1}^d w_k^{(2)} u_{ik}^{(1)} \right) \right] = 0 \\
\iff & \sum_{i=1}^d (w_i^{(1)} w_i^{(2)}) - \sum_{i=1}^n \sum_{j=1}^d \sum_{j=1}^d w_j^{(1)} u_{ij}^{(2)} w_k^{(2)} u_{ik}^{(1)} = 0 \\
\iff & \sum_{i=1}^d (w_i^{(1)} w_i^{(2)}) - \sum_{j=1}^d \sum_{j=1}^d \left(w_j^{(1)} w_k^{(2)} \left(\sum_{i=1}^n u_{ij}^{(2)} u_{ik}^{(1)} \right) \right) = 0 \\
\iff & \sum_{j=1}^d \sum_{k=1}^d \left( w_j^{(1)} w_k^{(2)} \left(\delta_{jk} - \sum_{i=1}^n u_{ij}^{(2)} u_{ik}^{(1)}\right)\right) = 0
\end{aligned}$$

where $\delta_{jk}$ denotes the Kronecker delta symbol. Imagine that the polynomial on the left hand side is the zero polynomial. This is equivalent to $\delta_{jk} - \sum_{i=1}^n u_{ij}^{(2)} u_{ik}^{(1)} = 0$ being true for every $j, k \in \{1,2,\dotsc,d\}$. This can be expressed as a matrix equation $(U^{(2)})^T U^{(1)} = I_d$. The matrices $U^{(2)}$ and $U^{(1)}$ have dimension $n \times d$. The matrix equation cannot be satisfied because $\operatorname{rank}(I_d) = d > n \geq \operatorname{rank}(U^{(2)})$, so the rank of one side of the equation is not equal to the rank of the other.
\end{proof}

\begin{lemma} \label{block-matrix-condition}

A matrix of the form $L \oplus M$ (where ``$\oplus$'' means direct sum of matrices) has a pseudoinverse if and only if $L$ and $M$ have pseudoinverses. In which case, $(L \oplus M)^+ = L^+ \oplus M^+$. This is true for arbitrary rings.

\end{lemma}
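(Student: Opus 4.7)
The plan is to handle the two directions of the biconditional separately, and then read off the formula at the end.

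For the reverse direction, assume $L^+$ and $M^+$ both exist. I propose $L^+ \oplus M^+$ as a candidate pseudoinverse of $L \oplus M$ and verify the four Moore-Penrose axioms by direct computation. Since multiplication of block-diagonal matrices is block-diagonal and the involution of a block-diagonal matrix is the block-diagonal of the involutions, each of the four axioms for $L \oplus M$ decomposes into the corresponding axiom for $L$ together with the corresponding axiom for $M$, both of which hold by assumption. Since the pseudoinverse is unique whenever it exists (and this holds over any ring with involution), this verification also yields the formula $(L \oplus M)^+ = L^+ \oplus M^+$.

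For the forward direction, assume $(L \oplus M)^+$ exists and write it in block form as $K = \bigl(\begin{smallmatrix} P & Q \\ R & S \end{smallmatrix}\bigr)$. Reading off the $(1,1)$-blocks of axioms 1, 3, and 4 for $K$ yields three relations about $L$ and $P$, namely $LPL = L$, $(PL)^* = PL$, and $(LP)^* = LP$; axiom 2, however, only gives $PLP + QMR = P$, which does not immediately isolate $P$. Rather than fight with the cross term $QMR$, I propose $Y := PLP$ as the pseudoinverse of $L$. From $LPL = L$ one immediately derives $PLPL = PL$ and $LPLP = LP$ by multiplying on the left or right by $P$. Using these identities together with the self-adjointness of $PL$ and $LP$, all four axioms for $Y$ follow by routine manipulation: $LYL = LPLPL = L$, $YLY = PLPL \cdot PLP = PL \cdot PLP = P(LPL)P = PLP = Y$, $(LY)^* = (LPLP)^* = (LP)^* = LP = LY$, and $(YL)^* = (PLPL)^* = (PL)^* = PL = YL$. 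So $L^+$ exists and equals $PLP$; the same argument applied to the $(2,2)$-block gives $M^+ = SMS$. Invoking the reverse direction then yields the formula, and as a byproduct $Q = R = 0$.

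The main obstacle is the apparent non-transfer of axiom 2 to the $(1,1)$ block: naively one would hope $P = L^+$, but the presence of $QMR$ in the expansion of axiom 2 blocks this. The key insight is that one does not need axiom 2 for $P$ directly; the weaker $(1,1)$-block information from axioms 1, 3, 4 is already enough to build a bona fide pseudoinverse for $L$ via the substitution $Y = PLP$. The block-diagonality of $K$ itself is then recovered retroactively from uniqueness of the pseudoinverse.
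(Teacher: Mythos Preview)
Your proof is correct and follows essentially the same route as the paper: both directions are handled identically, and for the forward direction you extract axioms 1, 3, 4 for the $(1,1)$-block $P$, then propose $PLP$ as the pseudoinverse of $L$ and verify all four axioms directly---this is precisely the paper's ``Alternatively, it can be directly shown (via the axioms) that $ALA$ is a pseudoinverse of $L$'' with the details filled in. The only cosmetic difference is that the paper also cites Bhaskara Rao's observation (that a $\{1,3,4\}$-inverse implies a Moore--Penrose inverse) as an alternative justification, whereas you go straight to the explicit candidate.
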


\begin{proof}

We haven't found a proof of this basic fact in any textbook, so we provide one here. The proof uses only the four defining axioms, making it valid over any ring.

In one direction, the proof is obvious. If $L$ and $M$ have pseudoinverses then $L^+ \oplus M^+$ must be the pseudoinverse of $L \oplus M$.

We prove the converse. Write $(L \oplus M)^+$ as a block matrix: $(L \oplus M)^+ = \begin{bmatrix}A & B \\ C & D \end{bmatrix}$. Notice that the aim is to show that $A = L^+$ and $D = M^+$ (or equivalently, that $B$ and $C$ are zero matrices).

Plugging the block matrix representations of $L \oplus M$ and $(L \oplus M)^+$ into the 4 axioms of the pseudoinverse gives:

\begin{enumerate}
  \item $\left[\begin{matrix}L A L & L B M\\M C L & M D M\end{matrix}\right] = \left[\begin{matrix}L & 0\\0 & M\end{matrix}\right]$
  \item $\left[\begin{matrix}A L A + B M C & A L B + B M D\\C L A + D M C & C L B + D M D\end{matrix}\right] = \left[\begin{matrix}A & B\\C & D\end{matrix}\right]$
  \item $\begin{bmatrix}(LA)^* &(MC)^* \\ (LB)^* & (MD)^*\end{bmatrix} = \begin{bmatrix} LA & LB \\ MC & MD\end{bmatrix}$
  \item $\begin{bmatrix} (AL)^* & (CL)^* \\ (BM)^* & (DM)^* \end{bmatrix} = \begin{bmatrix} AL & BM \\ CL & DM\end{bmatrix}$
\end{enumerate}

This shows that $A$ satisfies all of the equations to be the pseudoinverse of $L$, except for axiom 2 ($ALA = A$). Likewise, the same can be said for the relationship between $D$ and $M$ (all the conditions follow except for axiom 2: $DMD = D$).

The following observation is based on Section 3.1 of \emph{Theory of generalized inverses over commutative rings} by K.P.S. Bhaskara Rao. There, it shows that if a matrix has a generalised inverse satisfying axioms 1, 3 and 4, then it has a pseudoinverse.

Alternatively, it can be directly shown (via the axioms) that $ALA$ is a pseudoinverse of $L$. Likewise, $DMD$ is the pseudoinverse of $M$. Both claims are easily verified.
\end{proof}

\begin{lemma} \label{pseudoinverse-jordan-blocks}

Let $S = [J_1,J_1] \oplus [J_2,J_2] \oplus \dotsb \oplus [J_k,J_k]$ where each $J_i$ is a Jordan block. $S$ has a pseudoinverse if and only if every $J_i$ is invertible or equal to $[0]$. In which case, $S^+ = [J_1^+,J_1^+] \oplus [J_2^+,J_2^+] \oplus \dotsb \oplus [J_k^+,J_k^+]$

\end{lemma}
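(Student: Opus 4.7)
The plan is to reduce the problem to a single Jordan block via lemma \ref{block-matrix-condition} and then use the rank-based existence criterion from corollary \ref{rank-necessary-and-sufficient} to classify which blocks can appear.

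First, I would apply lemma \ref{block-matrix-condition} to write the existence of the pseudoinverse of $S$ as the conjunction that each $[J_i, J_i]$ has a pseudoinverse, and simultaneously to get $S^+ = \bigoplus_i [J_i, J_i]^+$. This reduces the entire statement to the single-block claim: $[J, J]$ has a pseudoinverse if and only if $J$ is invertible or $J = [0]$, and in either case $[J, J]^+ = [J^+, J^+]$.

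Next, I would invoke corollary \ref{rank-necessary-and-sufficient} for the matrix $[A, B]$ with $A = B = J$. The four rank conditions collapse to the single requirement $\operatorname{rank}(J^2) = \operatorname{rank}(J)$. Now I analyze this for a Jordan block $J$ of size $n$ with eigenvalue $\lambda$. If $\lambda \neq 0$, then $J$ is invertible, so $J^2$ is invertible, and the rank condition holds trivially. If $\lambda = 0$, then $J$ is nilpotent, and a direct computation on the standard nilpotent Jordan block shows $\operatorname{rank}(J) = n - 1$ while $\operatorname{rank}(J^2) = \max(n-2, 0)$; these agree only when $n = 1$, i.e.\ $J = [0]$. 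This handles the "if and only if" part.

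For the explicit formula, I would check the two cases directly against the four pseudoinverse axioms from the previous sections. If $J$ is invertible, then $[J, J]^{-1} = [J^{-1}, J^{-1}]$, and one checks that $J^+ = J^{-1}$, so $[J,J]^+ = [J^+, J^+]$. If $J = [0]$, then $[J, J] = [0, 0]$ is the zero matrix, its pseudoinverse is again the zero matrix, and $J^+ = [0]$, so once more $[J,J]^+ = [J^+, J^+]$. Assembling these per-block formulas via lemma \ref{block-matrix-condition} yields the claimed expression for $S^+$.

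The main obstacle is simply the Jordan-block rank calculation in the nilpotent case; everything else is a direct appeal to earlier results. Nothing in the argument uses anything beyond the earlier rank criterion and the direct-sum lemma, so once those two ingredients are invoked the proof is essentially arithmetic.
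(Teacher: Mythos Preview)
Your proposal is correct and follows essentially the same route as the paper: both reduce to a single block via lemma \ref{block-matrix-condition}, handle the invertible and $[0]$ cases directly, and rule out non-trivial nilpotent blocks by the rank/image criterion. The only cosmetic difference is that the paper cites lemma \ref{necessary-condition} (the image form) for the nilpotent case while you cite corollary \ref{rank-necessary-and-sufficient} (the rank form) and make the rank computation $\operatorname{rank}(J)=n-1$, $\operatorname{rank}(J^2)=\max(n-2,0)$ explicit; this is the same argument.
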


\begin{proof}

By lemma \ref{block-matrix-condition}, we have that $S$ only has a pseudoinverse if and only if each $[J_i,J_i]$ has a pseudoinverse. In which case, we have that $S^+ = [J_1,J_1]^+ \oplus [J_2,J_2]^+ \oplus \dotsb \oplus [J_k,J_k]^+$. Clearly, if $J_i$ is invertible then $[J_i,J_i]^+ = [J_i^{-1},J_i^{-1}] = [J_i^{+},J_i^{+}]$. If $J_i$ is non-invertible with $J_i = [0]$, then we can use the fact that $[0]^+ = [0]$ (easily verified) to conclude that $[J_i, J_i]^+ = [J_i^+, J_i^+]$. This then leaves the case where $J_i$ is a non-trivial nilpotent, in which case $[J_i,J_i]$ does not have a pseudoinverse because the condition $\operatorname{Im}(AB) = \operatorname{Im}(A) \cong \operatorname{Im}(B) = \operatorname{Im}(BA)$ given in lemma \ref{necessary-condition} is violated.
\end{proof}

\begin{lemma} \label{kernel-pinv}
  If $M$ has a pseudoinverse then $\ker(M^* M) = \ker(M)$.
\end{lemma}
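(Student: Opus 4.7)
The plan is to avoid unpacking $M = [A,B]$ and $M^+ = [C,D]$ via the diagrammatic characterisation, and instead argue directly from the four defining axioms of the pseudoinverse. This makes the proof short and in fact shows the statement holds for any element with a pseudoinverse in an arbitrary ring with involution, not just the double-complex case. So the only ingredients I would use are axioms~1 and~4, plus the fact that $^*$ is a ring anti-involution.

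The easy inclusion $\ker(M) \subseteq \ker(M^*M)$ is immediate: if $Mv = 0$ then $M^*Mv = M^*(Mv) = 0$. For the non-trivial inclusion, I would start from $M^*Mv = 0$ and left-multiply by $(M^+)^*$ to get $(M^+)^* M^* M v = 0$. Because $^*$ is an anti-involution, $(M^+)^* M^* = (M M^+)^*$, and axiom~4 says $(MM^+)^* = MM^+$. Substituting gives $M M^+ M v = 0$, and then axiom~1 collapses $MM^+M$ to $M$, yielding $Mv = 0$. Combining the two inclusions gives $\ker(M^*M) = \ker(M)$.

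The step I expect to be the main conceptual obstacle is not the manipulation itself, which is essentially a one-liner, but rather the fact that the standard proof over $\mathbb{C}$ is not available here. In the complex case one argues $\langle M^*Mv, v\rangle = \langle Mv, Mv\rangle = \|Mv\|^2 = 0 \implies Mv = 0$, but over the double-complex numbers non-zero vectors can have zero norm (for instance $(1,j)^T$, as already noted in lemma~\ref{orthonormal}), so the positivity argument fails. This is precisely why we must lean on the existence of a pseudoinverse: the structural identity $(M^+)^*M^* = MM^+$ supplied by axiom~4 replaces the missing positivity, and axiom~1 then finishes the job. It is worth remarking in the write-up that this is another illustration of the recurring theme of the paper: familiar complex-linear-algebra facts survive in the double-complex setting provided one has enough structural input to substitute for positivity of the Hermitian form.
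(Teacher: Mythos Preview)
Your proof is correct and is in fact cleaner than the paper's. You argue directly from axioms~1 and~4 and the anti-involution property of $^*$, which yields the result over any ring with involution in a couple of lines. The paper instead unpacks $M=[A,B]$, observes that $\ker(M)=\ker(M^*M)$ is equivalent to $\ker(A)=\ker(BA)$ and $\ker(B^T)=\ker(A^TB^T)$, and then deduces these kernel equalities from the rank identities in Corollary~\ref{rank-necessary-and-sufficient}. Your route avoids the component decomposition and the appeal to the rank characterisation entirely; the paper's route, while less elegant here, does have the side benefit of making explicit which component-level identities are being used, which ties it back into the surrounding narrative. Your remark about why the standard positivity argument fails over the double-complex numbers is apt and worth keeping.
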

\begin{proof}
  Let $M = [A,B]$. Clearly, $\ker(M) \subseteq \ker(M^* M)$. We wish to conclude that $\ker(M) = \ker(M^* M)$. This is equivalent to saying that $\ker(A) = \ker(BA)$ and $\ker(B^T) = \ker(A^T B^T)$. We can argue that this is true by showing that $\operatorname{rank}(A) = \operatorname{rank}(BA) = \operatorname{rank}(B^T) = \operatorname{rank}(A^T B^T)$. This then follows from corollary \ref{rank-necessary-and-sufficient}.
\end{proof}

\begin{theorem} If the matrix pseudoinverse exists, then so does the Jordan SVD. \end{theorem}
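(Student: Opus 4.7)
The plan is to unfold the sought factorisation $M = U[J,J]V^*$, with $U = [P_1, P_1^{-1}]$ and $V = [P_2, P_2^{-1}]$, into the pair of complex-matrix identities $A = P_1 J P_2^{-1}$ and $B = P_2 J P_1^{-1}$. Since ``unitary'' over the double-complex numbers only requires $P_1, P_2$ to be invertible, the task reduces to constructing invertible complex matrices $P_1, P_2$ and a complex Jordan matrix $J$ that satisfy both identities at once.

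First I would invoke the diagrammatic picture from the previous section: because $M$ has a pseudoinverse, $A$ decomposes on $V = \operatorname{Im}(B) \oplus \ker(A)$ as $f \oplus 0$ with $f : \operatorname{Im}(B) \to \operatorname{Im}(A)$ an isomorphism, and $B$ decomposes on $V = \operatorname{Im}(A) \oplus \ker(B)$ as $h \oplus 0$ with $h : \operatorname{Im}(A) \to \operatorname{Im}(B)$ an isomorphism. Then $fh$ is an automorphism of $\operatorname{Im}(A)$, and by the argument used in Lemma \ref{square-root} it admits a square root $T$ whose Jordan form $J_T$ has no non-trivial nilpotent blocks. Pick a Jordan basis $u_1, \dotsc, u_r$ of $\operatorname{Im}(A)$ in which $T$ is the matrix $J_T$, and set $J := J_T \oplus 0_{d-r}$, which is a complex Jordan matrix of size $d \times d$.

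Next, I would build $P_1$ and $P_2$ explicitly. Let $P_1$ have columns $u_1, \dotsc, u_r$ followed by any basis $w_1, \dotsc, w_{d-r}$ of $\ker(B)$ (a basis of $V$ since $V = \operatorname{Im}(A) \oplus \ker(B)$), and let $P_2$ have columns $f^{-1}(Tu_1), \dotsc, f^{-1}(Tu_r)$ followed by any basis $x_1, \dotsc, x_{d-r}$ of $\ker(A)$ (a basis of $V$ since $V = \operatorname{Im}(B) \oplus \ker(A)$). Both matrices are invertible by construction. Verification of $AP_2 = P_1 J$ and $BP_1 = P_2 J$ is then column-by-column: the $\ker(A)$ and $\ker(B)$ columns vanish on both sides, matching the zero columns of $J$; on the first $r$ columns the two identities reduce respectively to $A(f^{-1}(Tu_i)) = Tu_i$ (immediate because $A$ acts as $f$ on $\operatorname{Im}(B)$) and, after a brief rearrangement using linearity of $f^{-1}$ together with $T^2 = fh$, to $h(u_i) = f^{-1}(T^2 u_i) = f^{-1}(fh(u_i))$.

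The main obstacle I expect is the conceptual one: ensuring that the \emph{same} Jordan matrix $J$ works for both identities. The fact that makes this possible is the similarity $hf = f^{-1}(fh)f$, which means any square root of $fh$ transports via $f$ to a square root of $hf$ with identical Jordan structure. This is precisely what lets the ``staggered'' columns of $P_2$, defined from a single Jordan basis for $T$ on $\operatorname{Im}(A)$, simultaneously satisfy the Jordan-form equation for $B$. Once this similarity is noticed, the construction above essentially writes itself.
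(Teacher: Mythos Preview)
Your argument is correct and is in fact cleaner than the route taken in the paper. The paper constructs $V$ and $S$ first, by taking the Jordan decomposition $\sqrt{BA}=PJP^{-1}$ on the whole space (via Lemma~\ref{square-root}), and then sets $V=[P,P^{-1}]$, $S=[J,J]$. It then \emph{defines} a candidate $U' = MVS^{+}$, shows that $(U')^*U' = J^{+}J^{2}J^{+}$ is diagonal with entries in $\{0,1\}$, argues that the zero-norm columns of $U'$ are genuinely zero, and finally invokes the Gram--Schmidt extension lemma (Lemma~\ref{orthonormal}, with its algebraic-geometry measure argument) to fill those columns in and obtain a unitary $U$. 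Two further paragraphs are then spent verifying $U'SV^{*}=M$ and $USV^{*}=M$.

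Your construction sidesteps all of the orthonormal-extension machinery. By working with the square root of the automorphism $fh$ on $\operatorname{Im}(A)$ rather than with $\sqrt{BA}$ on the whole space, you never produce ``defective'' zero columns that have to be repaired: the columns for $\ker(B)$ and $\ker(A)$ are simply chosen as bases of those subspaces from the outset, and invertibility of $P_1,P_2$ is immediate from the direct-sum decompositions $V=\operatorname{Im}(A)\oplus\ker(B)$ and $V=\operatorname{Im}(B)\oplus\ker(A)$. The column-by-column checks you outline go through exactly as stated; the key identity $BP_1 e_i = P_2 J e_i$ on the first $r$ columns indeed reduces to $h(u_i)=f^{-1}(T^{2}u_i)=f^{-1}(fh(u_i))$, using that $T$ is invertible (since $\det(T)^2=\det(fh)\neq0$) so that $Tu_1,\dotsc,Tu_r$ is still a basis of $\operatorname{Im}(A)$ and hence $f^{-1}(Tu_1),\dotsc,f^{-1}(Tu_r)$ is a basis of $\operatorname{Im}(B)$.

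What the paper's approach buys is a closer parallel to the classical recipe ``find $V$ from $M^{*}M$, then recover $U$ from $MV$'', which hints at an algorithm; what your approach buys is a shorter, self-contained proof that does not need Lemma~\ref{orthonormal} at all.
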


\begin{proof}

We will show how to obtain the Jordan SVD of a matrix $M$ by making use of the pseudoinverse operation. The Jordan SVD of $M$ is written $U S V^*$.

We will first show how to obtain $V$ and $S$. Let $M = [A,B]$. Consider $\sqrt{M^* M} = [\sqrt{BA},\sqrt{BA}]$. By lemma \ref{square-root}, we know that $\sqrt{BA}$ exists. We take its Jordan decomposition and get $\sqrt{BA} = PJP^{-1}$. Let $V = [P,P^{-1}]$ and $S = [J,J]$. We have obtained $V$ and $S$, as we set out to do.

It remains now to find $U$. Since we aim to satisfy the equation $M = USV^*$, it's reasonable to think that $MVS^+$ might equal $U$ (where we have used the pseudoinverse operation on $S = [J,J]$). In general, it does not, but it's a reasonable starting point.

What is $S^+$ anyway, and how do we know that it exists? By lemma \ref{square-root}, we know that the matrix $J$ has not got any non-trivially nilpotent Jordan blocks. By lemma \ref{pseudoinverse-jordan-blocks}, $S = [J,J]$ must therefore have a pseudoinverse, which equals $[J^+,J^+]$.

Let $U' = MVS^+$. We aim to show that the non-zero columns of $U'$ form a pairwise orthogonal set. If this is true, then we can construct the matrix $U$ from $U'$ by replacing the zero columns of $U'$ with column vectors that are orthogonal to the non-zero columns of $U'$. A minor complication is that a non-zero column might still have zero norm, but this possibility is refuted later on. Lemma \ref{orthonormal} justifies that without this complication, we can always do this. One way to show that the columns of $U'$ are pairwise orthogonal is to show that $(U')^* U'$ is a diagonal matrix. Since $U' = MVS^+ = [APJ^+,J^+P^{-1}B]$, we have that $(U')^* U' = [J^+ P^{-1}BAPJ^+,J^+ P^{-1}BAPJ^+]$. Obviously, the two components of $(U')^* U'$ are identical because the matrix must be Hermitian. It remains to see that $J^+ P^{-1}BAPJ^+$ is a diagonal matrix. We simplify $J^+ P^{-1}BAPJ^+$ to get $J^+ P^{-1}BAPJ^+ = J^+ P^{-1}(PJP^{-1})^2PJ^+ = J^+ P^{-1}PJ^2P^{-1}PJ^+ = J^+ J^2J^+$. It remains to see that $J^+ J^2J^+$ is a diagonal matrix. By analysing the Jordan block composition of $J$ (which we get from lemma \ref{pseudoinverse-jordan-blocks}), we get that $J^+ J^2 J^+$ has the layout $\begin{bmatrix} 1 & & & & \\ & 1 & & & \\ & & \ddots & & \\ & & & 0 & \\ & & & & 0\end{bmatrix}$.

We need to rule out a complication: What if some column $c$ of $U'$ has norm $0$? We need to be able to conclude that $c$ is a zero column in that case. The following argument shows that this is indeed the case: We have that $S = [J_1, J_1] \oplus [J_2, J_2] \oplus \dotsb \oplus [J_k, J_k]$, so $S^+ = [J_1^+, J_1^+] \oplus [J_2^+, J_2^+] \oplus \dotsb \oplus [J_k^+, J_k^+]$. The column $c$ must be positioned in $U'$ in the same place as some block $[J_i^+, J_i^+]$ is positioned in $S^+$.\footnote{To explain what we mean: Let's say that $c$ is the second column of $U'$. Say that the second block $[J_2^+,J_2^+]$ of $S^+$ occupies columns 2 to 4 in matrix $S^+$. Since $c$ is found in column 2, and $[J_2^+,J_2^+]$ is also found in column 2, we can say that they are ``positioned in the same place'' in $U'$ and $S^+$ respectively.} $J_i$ is a non-invertible Jordan block.\footnote{To explain why: If $c$ occupies column $i$ of $U'$, we must then have that the $ii$th entry of $(U')^*U'$ is $0$ (the norm of $c$). But we also have that $(U')^*U' = J^+ J^2 J^+$. Let $[J_i,J_i]$ be positioned in the same place in $S$ as $c$ is in $U'$. If $J_i$ is invertible then $J_i^+ J_i^2 J_i^+ = I$, which would imply that the norm of $c$ is $1$, which it is not.} We know that $J_i$ cannot be non-trivially nilpotent, so we conclude that $J_i = [0]$. Recalling the definition $U' = MVS^+$, the multiplication of $MV$ on the right by $S^+$ causes the corresponding column of $MV$ to be scaled by $0$. So $c$ is a zero column.

We now need to show that if we multiply $U' SV^*$, we do indeed get $M$. We then leave verifying the fact $U S V^*$ is equal to $M$ to the next pagraph. Let $K = U' S V^*$. We wish to show that $K = M$. We have that $K = U' S V^* = MVS^+ S V^* = MV (J^+ J^2 J^+) V^* = M V \operatorname{diag}(1,1,\dotsc,0,0) V^*$. Observe that the columns of $V$ form a basis. Call the columns $v_1, v_2, \dotsc, v_n$. We wish to show that $Mv_i = Kv_i$ for every $v_i$; from which we can conclude that $M = K$. We note that $M^* M = V [J^2,J^2] V^*$. Either $M^* Mv_i \neq 0$ or $M^* Mv_i = 0$. If $M^* M v_i \neq 0$, we have that column $v_i$ in $V$ is positioned in the same place as Jordan block $J_j$ in $J$ where $J_j$ is invertible; we therefore have that $Kv_i = M V \operatorname{diag}(1,1,\dotsc,0,0) V^* v_i = M V \operatorname{diag}(1,1,\dotsc,0,0) e_i = M V e_i = Mv_i$. If on the other hand, we have that $M^* M v_i = 0$, then we have that $Mv_i = 0$ by lemma \ref{kernel-pinv}; we also have that column $v_i$ in $V$ is positioned in the same place as $J_j$ in $J$ where $J_j = [0]$, so we that $Kv_i = MV \operatorname{diag}(1,1,\dotsc,0,0)V^* v_i = 0 = Mv_i$.

Now that we know that $M = U' S V^*$, we wish to also conclude that $M = U S V^*$. Either a column $u_i$ of $U$ is found in $U'$, or it's positioned in the same place as a zero column in $U'$. If $u_i$ is in $U$, then the reader can verify that $U' S V^* v_i = U' S e_i = U S e_i = U S V^* v_i$. If $u_i$ is not in $U$, then we have that $Mv_i = U' S V^* v_i = U' S e_i = U' 0 = 0 = U S e_i = U S V^* v_i$.
\end{proof}

\hypertarget{conclusion}{%
\section{Conclusion}\label{conclusion}}

We finish this paper with two open problems concerning the Jordan SVD.
The Jordan SVD can be defined without using double or double-complex
numbers, allowing the broader linear algebra community to suggest
solutions to these problems. We say that a pair of \(n \times n\)
complex matrices \(A\) and \(B\), written \([A,B]\), has a Jordan SVD if
there exist invertible matrices \(P\) and \(Q\), and a Jordan matrix
\(J\), such that:

\begin{itemize}

\item
  \(A = PJQ^{-1}\).
\item
  \(B = QJP^{-1}\).
\end{itemize}

The two open problems are:

\begin{enumerate}
\def\labelenumi{\arabic{enumi}.}

\item
  Find a necessary and sufficient condition for the Jordan SVD to exist.
\item
  Prove that the Jordan matrix \(J\) is unique in all cases, subject to
  the restriction that all the eigenvalues of \(J\) belong to the
  \emph{half-plane}. In \cite{gutin}, we defined the half-plane to consist of
  those complex numbers which either have positive real part, or which
  have real part equal to zero but which have non-negative imaginary part.
\end{enumerate}

In this paper, we have made progress on problem 1 by showing that a
sufficient condition for the Jordan SVD to exist is that
\(\operatorname{rank}(A) = \operatorname{rank}(B) = \operatorname{rank}(AB) = \operatorname{rank}(BA)\).
Still, there are matrix pairs \([A,B]\) which have Jordan SVDs but which
don't satisfy this condition. 
The paper \cite{kaplansky} may be relevant, and taking inspiration from
it one might conjecture that a necessary and sufficient condition is for
$AB$ to be similar to $BA$.

In \cite{gutin}, we made progress on
problem 2 by showing that the Jordan SVD is unique whenever \(A\) and
\(B\) are invertible.

\bigskip{\bf Acknowledgments.} I would like to thank Gregory Gutin for many useful comments and suggestions. I would like to credit Sympy (\cite{sympy}) for its aid in helping me discover and prove results. I would like to thank the referee for his or her valuable feedback.

\bibliographystyle{unsrt}
\bibliography{paper2}

\end{document}